\newtheorem{theorem}{Theorem}
\newtheorem{proposition}[theorem]{Proposition}
\newtheorem{corollary}[theorem]{Corollary}
\theoremstyle{definition}
\newtheorem*{definition}{Definition}
\newtheorem{example}[theorem]{Example}
\newtheorem*{acknowledgement}{Acknowledgements}
\theoremstyle{remark}
\newtheorem{remark}[theorem]{Remark}
\newcommand{\Ncal}{{\mathcal N}}
\renewcommand{\AA}{\mathbb{A}}
\newcommand{\CC}{\mathbb{C}}
\newcommand{\GG}{\mathbb{G}}
\newcommand{\NN}{\mathbb{N}}
\newcommand{\PP}{\mathbb{P}}
\newcommand{\QQ}{\mathbb{Q}}
\newcommand{\RR}{\mathbb{R}}
\newcommand{\ZZ}{\mathbb{Z}}
\DeclareMathOperator{\Mat}{Mat}
\DeclareMathOperator{\preper}{PrePer}
\DeclareMathOperator{\diag}{diag}
\newcommand{\Qbar}{{\bar{\QQ}}}
\newcounter{CaseCount}
\begin{document}

\title
      {Canonical Height Functions For Monomial Maps}
\date{\today}

\author{Jan-Li Lin}
\address{Department of Mathematics, Indiana University, Bloomington \\ IN 47405 \\ USA}
\email{janlin@indiana.edu}

\author{Chi-Hao Wang}
\address{Department of Mathematics, National Central University, Jhongli, Taoyuan 32001, Taiwan}
\email{chihaowang84@gmail.com}

\thanks{}

\maketitle

\begin{abstract}
We show that the canonical height function defined by Silverman~\cite{S1} does not have the Northcott finiteness property
in general. We develop a new canonical height function for monomial maps. In certain cases, this new canonical height
function has nice properties.
\end{abstract}


\section{Introduction}

Height functions measure the arithmetic complexity of certain algebraically defined objects.
They play a important role in Diophantine geometry, Diophantine approximation and arithmetic dynamics.
The theory of canonical heights for a morphism $f:\mathbb{P}^N\to \mathbb{P}^N$ is quite well known.

For a dominant rational map, it is more difficult to define and study canonical height functions.
A case which people have studied and understood is the case of regular affine automorphisms, see \cite{K1,K2,Lee}.
Recently, Silverman~\cite{S1} developed the theory of canonical height functions for general dominant rational maps.
He also studied the behavior of canonical height functions in the case of monomial maps.

More precisely, for a dominant rational map $\varphi:\PP^N\dashrightarrow \PP^N$, the first dynamical degree of $\varphi$
is defined as
\[
\delta_\varphi = \lim_{n\to\infty} \deg(\varphi^n)^{\frac{1}{n}}.
\]
Now assume that $\varphi$ is defined over $\Qbar$, one needs a strong conjecture that the following infimum
\[
\ell_\varphi = \inf \{ \ell\ge 0\ |\ \sup_{n\ge 1}\frac{\deg(\varphi^n)}{n^\ell\delta_\varphi^n}<\infty \}
\]
exists (It is also conjectured that the infimum is an integer satisfying $0\le \ell_\varphi\le N$, see \cite[Conjecture 2]{S1}).
Also, define $\PP^N(\Qbar)_\varphi$ to be the set of points in $\PP^N(\Qbar)$ whose forward image is always well defined.
Assuming that the conjecture is true,
then, for $P\in \PP^N(\Qbar)_\varphi$, we define the canonical height of $P$ with respect to $\varphi$ by
\[
\hat{h}_\varphi^+(P) = \limsup_{n\to\infty}\frac{1}{n^{\ell_\varphi}\delta_\varphi^\ell} h(\varphi^n(P)).
\]
Notice that our notation for canonical height is slightly different from the notation in \cite{S1}, where the author uses
$\hat{h}_\varphi(P)$.

For general dominant rational map, the above conjecture is still open. On the other hand, for monomial maps,
the conjecture is true, and the degree sequence is well-understood.
Given a matrix $A$ with integer entries we associate a selfmap
$\varphi_A$ on the algebraic torus $\GG_m^N(\Qbar)$
by the formula
\[
\varphi_A(x_1,\ldots,x_N) := (x_1^{a_{11}}\cdots x_N^{a_{1N}},\ \cdots\ ,x_1^{a_{N1}}\cdots x_N^{a_{NN}}).
\]

The map $\varphi_A : \GG_m^N(\Qbar) \to \GG_m^N(\Qbar) $ is called the monomial map associated to $A$. 
The dynamical degree of $\varphi_A$ equals 
the spectral radius of $A$ (see \cite[Theorem 6.2]{HP}, or \cite[Theorem 6.2]{Lin1})
and the value $\ell_{\varphi_A}$ is a number $0\le \ell_\varphi\le N-1$
determined by the Jordan form of $A$ (see \cite[Theorem 6.2]{Lin1}). The canonical height function for monomial maps is
studied in \cite{S1}.

An important property we want for the height function is the Northcott style finiteness property.
It states that there should be only finitely many points for given bounded height and bounded degree.
More precisely, for monomial maps $\varphi_A$ on $\GG_m^N(\Qbar)$, given $B>0$ and $D>0$, we would want the set
\[
\bigl\{ P\in \GG_m^N(\Qbar)\ \bigl|\ \hat{h}^+_A(P) < B \text{ and } [\QQ(P):\QQ]<D \bigr\}
\]
to be finite.
However, in this paper, we prove that the canonical height function does not always satisfy the
Northcott finiteness property, see Example~\ref{ex:small_height_dim_2} and Proposition~\ref{prop:small_height}.

Next, we try to modify the definition of canonical height so that it will have the desired property.
The construction we use is inspired by the earlier work of Silverman on K3 surfaces~\cite{S2}, and
the later work of Kawaguchi and Lee on regular affine automorphisms~\cite{K1,K2,Lee}.
That is, we not only look at the height growth for the forward orbit, but we also look at the backward orbit.
The problem is that a monomial map $\varphi_A$ is generally not birational, hence we need to make some modification
to define the backward canonical height, then the total canonical height is defined to be the sum of the
forward and the backward canonical heights.

In section~\ref{sec:tot:can:ht}, we define and study the total canonical height function.
It has a uniqueness property (Theorem~\ref{thm:unique}), and for a class of monomial maps,
it is bounded below by the height of the point (Theorem~\ref{theorem:LB1}).
Thus, we have a Northcott finiteness property and a lower bound estimate for total canonical height
for this class of monomial maps.
This class of maps
includes all diagonalizable matrices in dimension 2, and a major class of matrices in dimensions
3 and 4.

For a non-diagonalizable matrix $A$, the canonical height function of $\varphi_A$ has some strange behavior.
We study a case of such maps thoroughly in section~\ref{sec:non_diag}. Namely, we study the case where $A$
has only one (real) eigenvalue. We show that, in this case, the map $\varphi_A$ will preserve a fibration, and
the canonical height function is constant on each fiber, i.e., it only depends on the base.
This also shows how the geometry of the map controls the arithmetic.

Finally, in the last section, we show that the total canonical height function we proposed is still not
the ultimate solution to all monomial maps. Therefore, a more refined theory of canonical height function
for monomial maps is still needed to be developed.

\begin{acknowledgement}
The paper was developed when both authors visited the Institute for Computational and Experimental Research in Mathematics (ICERM).
We would like to thank ICERM for the hospitality and support.
We would also like to thank Liang-Chung Hsia and Joseph H. Silverman for helpful discussions and comments.
\end{acknowledgement}


\section{Properties of Canonical Height Functions}

First, we define some notations. Throughout this paper,
we write $\Mat_{N}^{+}(\mathbb{Z})$ for the $N \times N$ matrices with integer coefficients
and nonzero determinant. We use the notation
\[
\diag(\lambda_1,\cdots,\lambda_N)
\]
to denote the diagonal matrix with diagonal entries $\lambda_1,\cdots,\lambda_N$.
We always use $P=(x_1,\cdots,x_N)$ to denote a point in $\GG_m^N(\Qbar)$.
%
For simplicity, we write $\delta_A=\delta_{\varphi_A}$
and $\ell_A = \ell_{\varphi_A}$ for a monomial map $\varphi_A$;
the number $\ell_A+1$ is the size of the largest Jordan blocks for those eigenvalues of $A$
with modulus $\delta_A$.

Recall from the introduction that,
for a point $P=(x_1,\ldots,x_N) \in \mathbb{G}_m^N(\Qbar)$,
the {\em canonical height of $P$ with respect to $\varphi_A$} is defined as
\[
\hat{h}_{\varphi_A}^{+}(P)=\hat{h}_{A}^{+}(P):=\limsup_{n\rightarrow \infty}
\frac{1}{n^{\ell_{A}}\delta_{A}^n} h(\varphi_A^n(P)).
\]

\begin{example}
\label{ex:easy}
A simple example is for $d>0$ and $A=d\cdot I_N$, we have
$\varphi_A(x_1,\cdots,x_N)=(x_1^d,\cdots,x_N^d)$, and since
\[
h(x_1^d,\cdots,x_N^d) = d\cdot h(x_1,\cdots,x_N),
\]
we have
\[
\hat{h}_{A}^{+}(P)=\limsup_{n\rightarrow \infty}
\frac{h(P^{d^n})}{d^n} =h(P).
\]
\end{example}

\begin{example}
\label{ex:easy_too}
We still assume $d>0$, but now let $A = -d\cdot I_N$. Then we have
$\varphi_A(x_1,\cdots,x_N)=(x_1^{-d},\cdots,x_N^{-d})$, thus
\begin{eqnarray*}
\varphi_A^n(x_1,\cdots,x_N) &=&
\left\{
  \begin{array}{ll}
    (x_1^{d^n},\cdots,x_N^{d^n}), &  \text{if $n$ is even;} \\
    (x_1^{-d^n},\cdots,x_N^{-d^n}), & \text{if $n$ is odd.}
  \end{array}
\right.
\end{eqnarray*}
On the other hand, the dynamical degree $\delta_A=|-d|=d$.
The sequence $\{ \frac{h(\varphi_A(P))}{d^n} \}_{n=1}^\infty$ has two limit point, namely,
$h(P)$ and $h(P^{-1})$. Therefore, the canonical height is the maximum of the two, i.e.,
\[
\hat{h}_{A}^{+}(P)= \max\{ h(P),h(P^{-1})\}.
\]
\end{example}

Several properties for the canonical height function has been proved by Silverman in \cite{S1}:
\begin{enumerate}
\item $0\le \hat{h}_{\varphi}^{+}(P) <\infty$.
\item $\hat{h}_{\varphi}^{+}(\varphi(P)) = \delta_\varphi\cdot\hat{h}_{\varphi}^{+}(P)$.
\item If $P\in\preper(\varphi)$, then $\hat{h}_{\varphi}^{+}(\varphi(P))=0$
\item Suppose $A\in\Mat_N^+(\ZZ)$ is a matrix with $\rho(A)>1$ and suppose the characteristic polynomial
of $A$ is irreducible in $\QQ[x]$, then for $P\in\GG_m^N(\Qbar)$,
\[
P\in\preper(\varphi_A)\Longrightarrow \hat{h}_{\varphi}^{+}(P)=0.
\]
\end{enumerate}

Nest, we turn to the Northcott finiteness property.
The following example shows that, even in $\GG_m^N(\QQ)$ (that is, degree $=1$), there
can be infinitely many points with arbitrarily small canonical height.

\begin{example}
\label{ex:small_height_dim_2}
Let
$A=
\left(
  \begin{array}{cc}
    2 & 1 \\
    1 & 1 \\
  \end{array}
\right)$,
so $\varphi_A(x,y)=(x^2 y,xy)$, and
the eigenvalues of $A$ are $\lambda_1=\frac{3+\sqrt{5}}{2}$ and $\lambda_2=\frac{3-\sqrt{5}}{2}$.
Notice that
\[
A^n =
\left(
  \begin{array}{cc}
    a_{1,1}(n) & a_{1,2}(n) \\
    a_{2,1}(n) & a_{2,2}(n) \\
  \end{array}
\right)
=
\left(
  \begin{array}{cc}
    \frac{(5+\sqrt{5})\lambda_1^n+(5-\sqrt{5})\lambda_2^n}{10}
    & \frac{\lambda_1^n-\lambda_2^n}{\sqrt{5}} \\
    \frac{\lambda_1^n-\lambda_2^n}{\sqrt{5}}
    & \frac{(5-\sqrt{5})\lambda_1^n+(5+\sqrt{5})\lambda_2^n}{10} \\
  \end{array}
\right)
\]
Therefore, we have the following
    \begin{eqnarray*}
    \hat{h}_{A}^{+}(P)
    & = & \limsup_{n \rightarrow \infty}\sum_{v \in M_K}\max\Bigl\{0,
          \frac{a_{1,1}(n)}{\lambda_1^n}\log\|x\|_v
          +\frac{a_{1,2}(n)}{\lambda_1^n}\log\|y\|_v,\\
    &&    \hspace{3.4cm}\frac{a_{2,1}(n)}{\lambda_1^n}\log\|x\|_v
          +\frac{a_{2,2}(n)}{\lambda_1^n}\log\|y\|_v \Bigr\}\\
    & = & \frac{1}{\sqrt{5}} \sum_{v \in M_K}\max\Bigl\{0,
          \frac{\sqrt{5}+1}{2}\log\|x\|_v +\log\|y\|_v\Bigr\}.
    \end{eqnarray*}

Since $\frac{\sqrt{5}+1}{2}$ is irrational, for all $\varepsilon > 0$,
there exists integers $y_1, y_2$ such that $|y_1\frac{\sqrt{5}+1}{2}+y_2|<\varepsilon$.
Let $P=(2^{y_1},2^{y_2})$, then
\[
\hat{h}_{A}^{+}(P) \leq  \frac{\log(2)}{\sqrt{5}}\varepsilon.
\]
Also, observe that $P$ is not preperiodic. Hence, by
\cite[Corollary 31]{S1}, $\hat{h}_{A}^{+}(P)>0$. In fact, if we
replace the $2$ in the definition of $P$ by another prime number, we can obtain infinitely many
such points with small (but nonzero) canonical height, using a similar argument.
\end{example}

Generalizing the above example, we obtain the following proposition in arbitrary dimension.

\begin{proposition}
\label{prop:small_height}
Given $A\in\Mat_N^+(\ZZ)$, suppose its characteristic polynomial is irreducible, and all the eigenvalues are
distinct and positive. Then, for any $\varepsilon >0$, there are infinitely many $P\in\GG_m^N(\QQ)$ with
$0 < \hat{h}_{A}^{+}(P) < \varepsilon$.
\end{proposition}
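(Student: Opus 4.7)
The plan is to compute $\hat h_A^+$ on a carefully chosen one-parameter family of points via a closed-form expression generalizing the one displayed in Example~\ref{ex:small_height_dim_2}, and then invoke Diophantine approximation to make the result arbitrarily small but nonzero. Under the hypotheses the characteristic polynomial of $A$ has $N$ distinct positive real roots $\lambda_1>\lambda_2>\cdots>\lambda_N>0$, so $A$ is diagonalizable over $\RR$; in particular $\ell_A=0$ and $\delta_A=\lambda_1=:\lambda$. Distinctness together with $\det A=\prod\lambda_j\in\ZZ_{>0}$ forces $\lambda>1$. Writing $A=SDS^{-1}$ with $D=\diag(\lambda_1,\dots,\lambda_N)$, one has $\lambda^{-n}A^n\to uv^T$, where $u$ is the right $\lambda$-eigenvector (the first column of $S$) and $v^T$ is the left $\lambda$-eigenvector (the first row of $S^{-1}$), normalized by $v^T u=1$.

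For $P=(x_1,\dots,x_N)\in\GG_m^N(\QQ)$ only finitely many places of $\QQ$ contribute to $h(\varphi_A^n(P))=\sum_w\max_i\bigl\{0,(A^n\log\|P\|_w)_i\bigr\}$, and the spectral limit commutes with both the continuous max and the finite sum over places, so the $\limsup$ in the definition of $\hat h_A^+$ is an honest limit:
\[
\hat h_A^+(P)=\sum_{w\in M_{\QQ}}\max_{1\le i\le N}\bigl\{0,\,u_i L_w(P)\bigr\},\qquad L_w(P):=\sum_{j=1}^{N}v_j\log\|x_j\|_w.
\]
Now fix a prime $p$ and $n=(n_1,\dots,n_N)\in\ZZ^N\setminus\{\bfzero\}$, and set $P=(p^{n_1},\dots,p^{n_N})\in\GG_m^N(\QQ)$. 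Only $w=\infty$ and $w=p$ contribute, and one computes $L_\infty(P)=\alpha\log p=-L_p(P)$ where $\alpha:=\sum_j n_j v_j$. Hence $\hat h_A^+(P)\le C|\alpha|\log p$ with $C:=2\max_i|u_i|$, so shrinking $|\alpha|$ shrinks $\hat h_A^+(P)$.

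It remains to arrange $|\alpha|$ to be arbitrarily small (yet positive) along infinitely many $n$'s, and to know that the resulting $P$ is not preperiodic. Both hinge on the fact that $v_1,\dots,v_N$ are $\QQ$-linearly independent: any rational relation $\sum c_j v_j=0$ yields $\sum c_j v_j^{(\sigma)}=0$ for every $\sigma\in\Gal(\Qbar/\QQ)$, and the Galois conjugates of $v$ are eigenvectors of $A^T$ for the distinct eigenvalues $\lambda_1,\dots,\lambda_N$ and therefore span $\Qbar^N$, forcing $c=\bfzero$. Consequently $\sum_j\ZZ v_j\subset\RR$ is a dense additive subgroup (it has rank $N\ge 2$), so for any $\varepsilon>0$ there are infinitely many $n\in\ZZ^N\setminus\{\bfzero\}$ with $0<|\alpha|<\varepsilon/(C\log p)$, producing infinitely many distinct $P\in\GG_m^N(\QQ)$ with $\hat h_A^+(P)<\varepsilon$. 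For such $P$, preperiodicity would force $A^k n=n$ for some $k\ge 1$; diagonalizing and using that $\lambda_j\neq 1$ for every $j$ (irreducibility of the characteristic polynomial rules out $\lambda_j=1$ when $N\ge 2$) gives $n=\bfzero$, contrary to the choice. Therefore $P$ is not preperiodic, and the Silverman result invoked in Example~\ref{ex:small_height_dim_2} gives $\hat h_A^+(P)>0$. The main technical obstacle is the derivation of the closed-form place-sum above---namely, identifying the canonical height with a finite sum of place-wise linear functionals by passing the spectral limit $\lambda^{-n}A^n\to uv^T$ through both the coordinate max and the sum over places. Once this is in place, the rest of the argument is the Galois-theoretic linear-independence of $v_1,\dots,v_N$ together with the density of a finitely generated irrational subgroup of $\RR$.
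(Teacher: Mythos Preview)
Your proof is correct and follows the same overall architecture as the paper's: diagonalize $A$, pass to the limit $A^n/\lambda_1^n\to uv^T$ to obtain a closed-form place-sum for $\hat h_A^+$, and then make the dominant linear functional $\sum_j v_j n_j$ small on integer vectors. The genuine difference is in the Diophantine step. The paper picks a nonzero real vector $z$ in the kernel of $v^T$ (which exists since $N\ge 2$) and then simultaneously approximates its coordinates by rationals $y_i/y$, bounding $|\sum_j v_j y_j|$ by a constant times the approximation error. You instead prove that the coordinates $v_1,\dots,v_N$ of the left eigenvector are $\QQ$-linearly independent---via the elegant observation that a rational relation would persist under $\Gal(\Qbar/\QQ)$ and hence be orthogonal to a spanning set of $A^T$-eigenvectors---and conclude that $\sum_j\ZZ v_j$ is a rank-$N\ge 2$ subgroup of $\RR$, hence dense. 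This buys you two things the paper leaves somewhat implicit: the ``infinitely many'' clause falls out immediately (infinitely many $n$ with $0<|\alpha|<\delta$, each giving a distinct $P$), and the non-preperiodicity of $P$ is cleanly reduced to $n\ne 0$ together with the absence of the eigenvalue $1$. The paper's route is slightly more elementary (it does not need the full linear independence, only that $\ker v^T$ is nontrivial), but your Galois argument is short and yields a tighter package.
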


\begin{proof}
We write the matrix $A$ as $A=B\Lambda B^{-1}$, where
\[
\Lambda = \diag(\lambda_1,\cdots,\lambda_N)
\]
is diagonal with $\lambda_1> \cdots > \lambda_N>0$,
$B=(b_{i,j})$, and $B^{-1}=(c_{i,j})$.

Notice that the matrices $B$ and $B^{-1}$ are not unique.
However, denote $ A^n=\big(a_{i,j}(n)\big)$. By the relation
\[
a_{i,j}(n)=\sum_{k=1}^N b_{i,k}c_{k,j}\lambda_k^n,
\]
we can use Cramer's rule to give a formula for $b_{i,k}c_{k,j}$. Hence the numbers $b_{i,k}c_{k,j}$
does not depend on the choice of $B$, only depend on $A$.
Hence, we can define the constant
\[
R:=\max_{i,j,k}\{ |b_{i,k}c_{k,j}| \},
\]
which only depends on the matrix $A$.

Let $K$ be the splitting field of the characteristic polynomial of $A$.
Notice that the field $K$ is totally real.
We can find a nonzero vector $(z_1,\ldots,z_N) \in K^N \subset \RR^N$, such that
$\sum_{j=1}^N c_{1,j}z_j=0.$
Thus, for all $i$, we have
\[
\sum_{j=1}^N b_{i,1}c_{1,j}z_j = 0.
\]

We can find integers $y_1,\ldots,y_N$, not all zero, and a nonzero integer $y$
such that
\[
|y z_i-y_i| < \varepsilon' \text{ for $i=1,\cdots, N$.}
\]
For all $i=1,\cdots,N$, we have the upper bound
\begin{eqnarray*}
|\sum_{j=1}^N b_{i,1}c_{1,j}y_j|
&=& |y\cdot \sum_{j=1}^N b_{i,1}c_{1,j}z_j - \sum_{j=1}^N b_{i,1}c_{1,j}y_j| \\
&=& |\sum_{j=1}^N b_{i,1}c_{1,j}(y z_j-y_j)| \\
&\leq& \sum_{j=1}^N |b_{i,1}c_{1,j}|\cdot |y z_j-y_j| \\
&\leq& (\sum_{j=1}^N |b_{i,1}c_{1,j}|) \cdot \varepsilon' \\
&\leq& N R\cdot \varepsilon'.
\end{eqnarray*}

Let $P=(2^{y_1},\ldots,2^{y_N})$, then we have 
\begin{eqnarray*}
\hat{h}_{A}^{+}(P)
&=& \limsup_{n\rightarrow \infty} \frac{1}{\lambda_1^n} h(\varphi_A^n(P)) \\
&=& \limsup_{n\rightarrow \infty} \sum_{v \in M_K} \max_{1 \leq i \leq N}\{ 0, \sum_{j=1}^N
    \frac{\sum_{k=1}^N b_{i,k}c_{k,j}\lambda_k^n}{\lambda_1^n} \log\|2^{y_j}\|_v \} \\
&=& \sum_{v \in M_K} \max_{1 \leq i \leq N} \{ 0, \sum_{j=1}^N b_{i,1}c_{1,j} \log\|2^{y_j}\|_v \} \\
&\leq& 2\log(2) \max_{1 \leq i \leq N} \{ |\sum_{j=1}^N b_{i,1}c_{1,j} y_j)| \} \\
&\leq& 2\log(2)N R\cdot \varepsilon'.
\end{eqnarray*}

Let $\varepsilon = \frac{\varepsilon'}{2\log(2)NR}$, then $\hat{h}_{A}^{+}(P)<\varepsilon$.
Also, observe that $P$ is not preperiodic. Hence, by
\cite[Corollary 31]{S1}, $\hat{h}_{A}^{+}(P)>0$. This completes the proof.
\end{proof}

\begin{remark}
The proposition also implies that a lower bound of Lehmer type does not exist for $\hat{h}_{A}^{+}$.
\end{remark}


\section{The Total Canonical Height Function}
\label{sec:tot:can:ht}

In this section, we construct a modified version of canonical height functions for monomial maps.
Our method is motivated by the construction in \cite{S2,K1,K2,Lee}. The function we construct is called the
{\em total canonical height} function, and it has all the properties we want for two-dimensional semisimple monomial maps
and certain cases of three dimensional monomial maps.

First, we are going to define the backward canonical height.
For $A\in \Mat^+_N(\mathbb{Z})$, let
\[
A'=|\det(A)|\cdot A^{-1} = \text{sgn}(\det (A))\cdot \text{ad}(A),
\]
where $\text{sgn}(.)$ is the sign function,
and $\text{ad}(A)$ is the classical adjoint matrix of $A$.
Notice that $A' \in \Mat^+_N(\mathbb{Z})$, and if $\det(A)\ne 0$, then
$\det(A')=\det(A)^{N-1}$ is also nonzero.

\begin{definition}
The {\em backward canonical height of $P\in\GG_m^N(\Qbar)$ with respect to $\varphi_A$} is defined as
\[
\hat{h}_{A}^{-}(P):=\hat{h}_{\varphi_{A'}}^{+}(P).
\]
\end{definition}

The motivation of the definition is as follows. First, we would want to define $\hat{h}_{A}^{-}(P)$ as
$\hat{h}_{A^{-1}}^{+}(P)$, but in general, $A^{-1}\in\Mat_N(\QQ)$ may not have integer entries. This
means, in particular, that $\varphi_{A^{-1}}$ involves taking roots of complex numbers, and is a multi-valued
function. However, for any two $Q, Q'$ with $\varphi_A(Q)=\varphi_A(Q')=P$, there are roots of unities
$\zeta_1,\cdots,\zeta_N$ such that $Q\cdot (\zeta_1,\cdots,\zeta_N)=Q'$. Thus $h(Q)=h(Q')$, and the
height $h(\varphi_{A^{-1}}(P))$ is indeed well-defined.

If we try to define $\hat{h}_A^-$ using $h(\varphi_{A^{-1}}(P))$, we will still encounter the problem
of what the dynamical degree of $\varphi_A^{-1}$ should be. One might be able to settle this problem
by applying the language of correspondences and the theory dynamics of correspondences, see,
 for example, the work of Dinh and Sibony~\cite{DS}.
But for monomial maps, one can avoid this problem by the following observation.

Notice that,
in the Example~\ref{ex:easy}, for a positive integer $d$, taking $d$-th power
to each coordinate does not change
the canonical height. Also notice that by Example~\ref{ex:easy_too},
taking a negative power will change the canonical height.
Thus, we let $d=|\det(A)|\ge 1$ and obtain the matrix
$A'$ with integer entries.

In fact, the matrix $A'$ is also used by the first author to show a duality for
pullback map on complimentary dimension for monomial maps, see \cite[Proposition 3.1]{Lin2}.
We will show in the following that this definition indeed gives
the desired property for canonical height functions as well.

Finally, we define the {\em total canonical height} by
\[
\hat{h}_{A}(P):=\hat{h}_{A}^{+}(P)+\hat{h}_{A}^{-}(P).
\]

The following standard properties of $\hat{h}_{A}$ can be deduced from the corresponding properties for $\hat{h}_{A}^{+}$.
\begin{proposition}
For $A \in \Mat_N^+(\mathbb{Z})$, we have
\begin{enumerate}
\item $0\le \hat{h}_A(P) <\infty$.
\item  Suppose the characteristic polynomial of $A$ is irreducible and $\rho(A)>1$ .
Then for $P \in \GG_m^N(\Qbar)$,
\[
\hat{h}_{A}(P)=0 \Longleftrightarrow P \in  \preper(\varphi_A)
\]
\end{enumerate}
\end{proposition}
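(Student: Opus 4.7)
The plan is to reduce each assertion to the corresponding statement for $\hat{h}^+$, applied in parallel to $\varphi_A$ and to $\varphi_{A'}$.

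For (1), the construction guarantees $A'\in\Mat_N^+(\ZZ)$, so Silverman's property~(1) of $\hat{h}^+$ applies both to $\hat{h}_{\varphi_A}^+(P)$ and to $\hat{h}_{\varphi_{A'}}^+(P)=\hat{h}_A^-(P)$; each summand is nonnegative and finite, and $0\le\hat{h}_A(P)<\infty$ follows by addition.

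For the forward implication of (2), I would note that $\hat{h}_A(P)=0$ forces each summand to vanish, and in particular $\hat{h}_A^+(P)=0$. The biconditional form of Silverman's characterization (\cite[Corollary~31]{S1}, invoked already in Example~\ref{ex:small_height_dim_2}) then yields $P\in\preper(\varphi_A)$.

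For the reverse implication, assume $P\in\preper(\varphi_A)$. The forward orbit of $P$ under $\varphi_A$ is then finite, so $h(\varphi_A^n(P))$ is bounded while $n^{\ell_A}\delta_A^n\to\infty$ (using $\delta_A=\rho(A)>1$); hence $\hat{h}_A^+(P)=0$ directly from the definition. To get $\hat{h}_A^-(P)=\hat{h}_{\varphi_{A'}}^+(P)=0$ I would argue that $P$ must be a torsion point of $\GG_m^N$. Under the hypotheses on $A$, \cite[Corollary~31]{S1} identifies $\preper(\varphi_A)$ with the torsion subgroup $\mu_\infty^N\subset\GG_m^N(\Qbar)$ (tuples of roots of unity). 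Since $\varphi_{A'}$ is a monomial map with integer exponents, it carries $\mu_\infty^N$ into itself, and every element of $\mu_\infty^N$ has finite orbit (each $m$-torsion subgroup is a finite invariant set). Thus $P\in\preper(\varphi_{A'})$, and the same boundedness argument applied to $\varphi_{A'}$ gives $\hat{h}_{\varphi_{A'}}^+(P)=0$.

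The main obstacle is precisely the need for the biconditional: property~(4) as listed is only one implication, so the argument hinges on the stronger \cite[Corollary~31]{S1}, which supplies both the identification $\preper(\varphi_A)=\mu_\infty^N$ and the implication $\hat{h}_A^+(P)=0\Rightarrow P\in\preper(\varphi_A)$. Once that tool is available, the whole proof reduces to the transfer principle ``preperiodic for one monomial map on $\GG_m^N(\Qbar)\Rightarrow$ torsion $\Rightarrow$ preperiodic for every monomial map,'' which bypasses any need to verify that the hypotheses of \cite[Corollary~31]{S1} transfer to $A'$.
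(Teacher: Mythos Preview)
Your argument is correct and mirrors the paper's proof almost exactly: part~(1) is the same summation of Silverman's property~(1) for $A$ and $A'$; for part~(2), the forward implication is handled identically via \cite[Corollary~31]{S1}, and the reverse implication proceeds by the same ``preperiodic $\Rightarrow$ torsion $\Rightarrow$ preperiodic for $\varphi_{A'}$'' transfer. The only cosmetic difference is that the paper cites \cite[Proposition~20(d)]{S1} (after observing that irreducibility and $\rho(A)>1$ force no eigenvalue to be a root of unity) rather than \cite[Corollary~31]{S1} for the step that the coordinates of a preperiodic $P$ are roots of unity, and it then invokes the listed property~(3) rather than spelling out the boundedness argument you gave.
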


\begin{proof}
Part (1) is a direct consequence of the corresponding property for $\hat{h}_A^+$ and $\hat{h}_{A'}^+$.

For part (2), if $\hat{h}_{A}(P)= \hat{h}_{A}^+(P) + \hat{h}_{A}^-(P)=0$, then $\hat{h}_{A}^+(P)=0$. By \cite[Corollary 31]{S1},
we know $P \in  \text{PrePer}(\varphi_A)$.
Conversely, the given conditions on $A$ implies that both eigenvalues of $A$ are not roots of unity.
So by \cite[Proposition 20(d)]{S1}, all coordinate of $P$ are roots of unity.
This means $P$ is a preperiodic point for both $\varphi_{A}$ and $\varphi_{A'}$, so
\[
\hat{h}_{A}^+(P)=\hat{h}_{A}^-(P)=0.
\]
Hence $\hat{h}_{A}(P)=0.$
\end{proof}

\begin{theorem}
\label{thm:unique}
Suppose $A \in \Mat_N^+(\mathbb{Z})$ satisfies $\rho(A)>1$, and denote $D=|\det(A)|=|\lambda_1\cdots\lambda_N|$.
Then
\begin{enumerate}
  \item For all $P \in \GG_m^N(\Qbar)$, we have
\[
\frac{\hat{h}_{A}(\varphi_{A}(P))}{|\lambda_1\lambda_N|} + \frac{\hat{h}_{A} ( \varphi_{A'}(P) )}{D}
             =\Bigl(\frac{1}{|\lambda_1|}+\frac{1}{|\lambda_N|}\Bigr)\cdot \hat{h}_{A}(P).
\]
  \item Moreover, $\hat{h}_{A}$ enjoys the following uniqueness property:
if $\hat{h'}$ is another function satisfying $(1)$ and $\hat{h'}=\hat{h}_{A}+O(1)$,
then $\hat{h'}=\hat{h}_{A}$.
\end{enumerate}
\end{theorem}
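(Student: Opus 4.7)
The plan for part~(1) is to decompose $\hat{h}_A=\hat{h}_A^++\hat{h}_A^-$ and compute how each summand transforms under $\varphi_A$ and $\varphi_{A'}$, then combine. Two of the four required identities, namely
\[
\hat{h}_A^+(\varphi_A(P))=|\lambda_1|\,\hat{h}_A^+(P)\qquad\text{and}\qquad \hat{h}_A^-(\varphi_{A'}(P))=\frac{D}{|\lambda_N|}\,\hat{h}_A^-(P),
\]
follow immediately from Silverman's scaling property~(2), because $\delta_A=|\lambda_1|$ and $\delta_{A'}=D/|\lambda_N|$ (the eigenvalues of $A'=DA^{-1}$ are the numbers $D/\lambda_i$). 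For the remaining two ``cross'' identities $\hat{h}_A^+(\varphi_{A'}(P))=(D/|\lambda_1|)\,\hat{h}_A^+(P)$ and $\hat{h}_A^-(\varphi_A(P))=|\lambda_N|\,\hat{h}_A^-(P)$, the key point is the algebraic relation $AA'=A'A=D\,I$, which makes both compositions $\varphi_A\circ\varphi_{A'}$ and $\varphi_{A'}\circ\varphi_A$ equal to the $D$-th power map on $\GG_m^N$. Concretely, $\varphi_A^n(\varphi_{A'}(P))=(\varphi_A^{n-1}(P))^D$, so $h(\varphi_A^n(\varphi_{A'}(P)))=D\cdot h(\varphi_A^{n-1}(P))$, and a one-step index shift in the $\limsup$ (using that $(n-1)^{\ell_A}/n^{\ell_A}\to 1$) contributes exactly the factor $D/\delta_A$. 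Substituting all four formulas into the left-hand side of~(1) and collecting the coefficients of $\hat{h}_A^+(P)$ and $\hat{h}_A^-(P)$ separately then reproduces the right-hand side.

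For part~(2), set $g=\hat{h}'-\hat{h}_A$; this is bounded (say $|g|\le M$) and inherits the functional identity~(1) from $\hat{h}'$ and $\hat{h}_A$ by linearity. Solving~(1) algebraically for $g(P)$ rewrites it as
\[
g(P)=a\,g(\varphi_A(P))+b\,g(\varphi_{A'}(P)),\qquad a=\frac{1}{|\lambda_1|+|\lambda_N|},\ \ b=\frac{|\lambda_1\lambda_N|}{D(|\lambda_1|+|\lambda_N|)},
\]
with $a,b>0$. Iterating this identity $n$ times produces the bound $|g(P)|\le M(a+b)^n$ for every $P$ and every $n$, so the conclusion $g\equiv 0$ reduces to the strict inequality $a+b<1$, equivalently
\[
|\lambda_1|+|\lambda_N|\ >\ 1+\frac{|\lambda_1\lambda_N|}{D}.
\]

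This inequality is the main obstacle and the place where the hypotheses $|\lambda_1|>1$ (from $\rho(A)>1$) and $D\in\ZZ_{\ge 1}$ both enter. I would establish it by cases on $|\lambda_N|$. If $|\lambda_N|<1$, the factorization $(|\lambda_1|-1)(1-|\lambda_N|)>0$ expands to $|\lambda_1|+|\lambda_N|>1+|\lambda_1\lambda_N|\ge 1+|\lambda_1\lambda_N|/D$. If $|\lambda_N|=1$, then $\prod|\lambda_i|=D$ with every $|\lambda_i|\ge 1$; the equality $D=1$ would force every $|\lambda_i|=1$, contradicting $\rho(A)>1$, so $D\ge 2$ and the inequality becomes the trivial $|\lambda_1|(1-1/D)>0$. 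If $|\lambda_N|>1$, then every $|\lambda_i|>1$ and $|\lambda_1|+|\lambda_N|>2$; meanwhile $|\lambda_1\lambda_N|/D=1/|\lambda_2\cdots\lambda_{N-1}|<1$ for $N\ge 3$, while in dimension $N=2$ one has $|\lambda_1\lambda_N|/D=1$ but $D\ge 2$, so AM-GM gives $|\lambda_1|+|\lambda_N|\ge 2\sqrt{D}>2=1+|\lambda_1\lambda_N|/D$. With $a+b<1$ in hand, letting $n\to\infty$ in the iteration bound forces $g\equiv 0$, completing the uniqueness.
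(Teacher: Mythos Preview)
Your argument is correct and follows essentially the same route as the paper: both prove (1) by decomposing $\hat h_A=\hat h_A^{+}+\hat h_A^{-}$, using $AA'=A'A=D\,I$ together with an index shift in the $\limsup$ to obtain the four transformation rules, and both reduce (2) to the same strict inequality $D+|\lambda_1\lambda_N|<D(|\lambda_1|+|\lambda_N|)$ applied to the bounded difference $g$. Your case split for that inequality (according to whether $|\lambda_N|<1$, $=1$, or $>1$) is organized differently from the paper's (which splits on the position of $D$ relative to $|\lambda_1|$ and $|\lambda_N|$), but both are elementary and valid.
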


\begin{remark}
Notice that, when $N=2$, the equality in (1) takes the simple form:
\[
\hat{h}_{A}(\varphi_{A}(P)) + \hat{h}_{A} ( \varphi_{A'}(P) )
             =\bigl(|\lambda_1|+|\lambda_2|\bigr)\cdot \hat{h}_{A}(P).
\]
\end{remark}

\begin{proof}
First, for all $P \in \GG_m^N(\Qbar)$, the following holds:
\[
\varphi_A(\varphi_{A'}(P))=\varphi_{D\cdot I_N}(P)=\varphi_{A'}(\varphi_A(P)),
\]
also notice that
\[
\delta_A=|\lambda_1|,\ \delta_{A'}=|\lambda_1\ldots\lambda_{N-1}|.
\]
As a consequence, one has
\begin{eqnarray*}
& & \hat{h}_{A}^{+} ( \varphi_{A'}(P))\\
&=& \limsup_{n\to\infty} \frac{h(\varphi_A^n(\varphi_{A'}(P)))}{n^\ell\cdot\delta_A^n}
 =  \limsup_{n\to\infty} \frac{h(\varphi_{A^n\cdot A'}(P))}{n^\ell\cdot\delta_A^n} \\
&=& \limsup_{n\to\infty} \frac{h(\varphi_{D\cdot A^{n-1}}(P))}{n^\ell\cdot\delta_A^n}
 =  \limsup_{n\to\infty} \frac{h(\varphi_A^{n-1}(P)^D)}{n^\ell\cdot\delta_A^n} \\
&=& \limsup_{n\to\infty} \frac{D}{\delta_A} \cdot \frac{(n-1)^\ell}{n^\ell}\cdot\frac{h(\varphi_A^{n-1})(P)}{(n-1)^\ell\cdot \delta_A^{n-1}}
 =  \frac{D\cdot \hat{h}_{A}^{+} (P) }{\delta_A} \\
&=& \frac{D\cdot \hat{h}_{A}^{+} (P) }{|\lambda_1|}.
\end{eqnarray*}
Similarly,
\[
\hat{h}_{A'}^{+} ( \varphi_{A}(P)) = \frac{D\cdot \hat{h}_{A}^{+} (P) }{\delta_{A'}}
  = |\lambda_N|\cdot\hat{h}_{A}^{+} (P) .
\]
Therefore,
\begin{eqnarray*}
& & \frac{\hat{h}_{A}(\varphi_{A}(P))}{|\lambda_1\lambda_N|} + \frac{\hat{h}_{A} ( \varphi_{A'}(P) )}{D} \\
&=& \frac{\hat{h}_{A}^{+} ( \varphi_{A}(P))}{|\lambda_1\lambda_N|} +
    \frac{\hat{h}_{A'}^{+} ( \varphi_A(P))}{|\lambda_1\lambda_N|}+
    \frac{\hat{h}_{A}^{+} ( \varphi_{A'}(P))}{D} +
    \frac{\hat{h}_{A'}^{+} ( \varphi_{A'}(P))}{D}\\
&=& \frac{\hat{h}_{A}^{+}(P)}{|\lambda_N|} +
    \frac{\hat{h}_{A'}^{+} (P)}{|\lambda_1|}+
    \frac{\hat{h}_{A}^{+} (P)}{|\lambda_1|} +
    \frac{\hat{h}_{A'}^{+}(P)}{|\lambda_N|}\\
&=& \Bigl(\frac{1}{|\lambda_1|}+\frac{1}{|\lambda_N|}\Bigr)\cdot \hat{h}_{A}(P).
\end{eqnarray*}

We claim that
\[
\frac{1}{|\lambda_1\lambda_N|}+\frac{1}{D}
< \frac{1}{|\lambda_1|}+\frac{1}{|\lambda_N|}.
\]
Assuming the claim, let us prove the uniqueness of $\hat{h}$.
Suppose $\hat{h'}$ is another function with properties (1) such that
$g:=\hat{h}_{A}-\hat{h'}$ is bounded on $\GG_m^2(\Qbar)$.
Let
$$
M:=\sup_{P \in \GG_m^N(\Qbar)} |g(P)|.
$$
Then
\begin{eqnarray*}
& & \Bigl(\frac{1}{|\lambda_1\lambda_N|}+\frac{1}{D}\Bigr)M
<\Bigl(\frac{1}{|\lambda_1|}+\frac{1}{|\lambda_N|}\Bigr) M  \\
& = &\sup_{P \in \GG_m^N(\Qbar)}\Bigl| \Bigl(\frac{1}{|\lambda_1|}+\frac{1}{|\lambda_N|}\Bigr)\cdot g(P)\Bigr|\\
& = &
\sup_{P \in \GG_m^N(\Qbar)} \Bigl| \frac{1}{|\lambda_1\lambda_N|}g ( \varphi_A(P) ) + \frac{1}{D}g(\varphi_{A'}(P)) \Bigr|\\
&\leq& \Bigl(\frac{1}{|\lambda_1\lambda_N|}+\frac{1}{D}\Bigr)M.
\end{eqnarray*}
This implies $M=0$, hence $\hat{h'}=\hat{h}_{A}.$

It remains to prove the claim. Notice that it is equivalent to the following inequality.
\[
D+|\lambda_1\lambda_N|
< D\cdot\bigl(|\lambda_1|+|\lambda_N|\bigr).
\]
There are three cases to be considered:
\[
D \ge |\lambda_1|,\  |\lambda_1|>D>|\lambda_N|,\text{ and }|\lambda_N|\ge D.
\]

For the first case, we have $D|\lambda_1|>D$ (since $|\lambda_1|>1)$ and $D|\lambda_N| \ge |\lambda_1\lambda_N|$.
Thus $D\cdot\bigl(|\lambda_1|+|\lambda_N|\bigr) > D+|\lambda_1\lambda_N|$.

As of the second case, we have $(|\lambda_1|-D)(|\lambda_N|-D)<0$ and
since $D\in\NN$, we know that
$-D^2+D \leq 0$.
Thus we have
\[
(|\lambda_1|-D)(|\lambda_N|-D)-D^2+D < 0,
\]
which implies the required inequality.

In the last case,
we have the inequality $D=\prod_{i=1}^N |\lambda_i| \geq D^N$. Thus $D = 1$.
Therefore $(|\lambda_1|-1)(|\lambda_N|-1)<0$ because $\rho(A)>1$.
Hence $ 1 + |\lambda_1\lambda_N| < |\lambda_1|+|\lambda_N|$ and this completes the proof of the claim.
\end{proof}

For a point $P\in\GG_m^N(\Qbar)$, one might want to know how fast the height function
grows in the orbit $\{\varphi^n(P)\}_{n=1}^\infty$.
As an application of the above theorem, we obtain a linear recurrence relation of the height sequence
$\{\hat{h}_A(\varphi_A^n(P))\}_{n=1}^\infty$ in the following.

\begin{corollary}
Suppose $A \in \Mat_N^+(\mathbb{Z})$ satisfies $\rho(A)>1$ and $P \in \GG_m^N(\Qbar)$.
Then
\[
\hat{h}_A(\varphi_A^{n+2}(P))
-(|\lambda_1|+|\lambda_N|)\cdot \hat{h}_A(\varphi_A^{n+1}(P))
+|\lambda_1\lambda_N|\cdot \hat{h}_A(\varphi_A^{n}(P))
=0
\]
for all $n\ge 0$.
Moreover, we have
\[
\hat{h}_A(\varphi_A^{n}(P))=
\frac{|\lambda_1\lambda_N^n|-|\lambda_1^n\lambda_N|}{|\lambda_1|-|\lambda_N|}\cdot \hat{h}_A(P)
+\frac{|\lambda_1^n|-|\lambda_N^n|}{|\lambda_1|-|\lambda_N|}\cdot \hat{h}_A(\varphi_A(P)).
\]
\end{corollary}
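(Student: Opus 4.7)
The plan is to apply Theorem~\ref{thm:unique}(1) with $\varphi_A^{n+1}(P)$ in place of $P$, and then convert the resulting term $\hat{h}_A\bigl(\varphi_{A'}(\varphi_A^{n+1}(P))\bigr)$ back into a value of $\hat{h}_A$ on some iterate of $\varphi_A$ at $P$.

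The crucial observation is that $A'\cdot A = D\cdot I_N$, so $\varphi_{A'}\circ\varphi_A = \varphi_{D\cdot I_N}$ is just coordinate-wise raising to the $D$-th power. Hence for every $n\ge 0$ one has the identity
\[
\varphi_{A'}\bigl(\varphi_A^{n+1}(P)\bigr) \;=\; \bigl(\varphi_A^n(P)\bigr)^D.
\]
The next step is a $D$-homogeneity property of $\hat{h}_A$, namely $\hat{h}_A(Q^D) = D\cdot \hat{h}_A(Q)$ for all $Q \in \GG_m^N(\Qbar)$. This follows from $h(Q^D) = D\cdot h(Q)$ on the Weil height, together with the fact that the $D$-th power map commutes with both $\varphi_A$ and $\varphi_{A'}$, so the scaling factor $D$ passes through the limsup defining each of $\hat{h}_A^+$ and $\hat{h}_A^-$.

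Substituting these two facts into Theorem~\ref{thm:unique}(1) applied at $\varphi_A^{n+1}(P)$ gives
\[
\frac{\hat{h}_A(\varphi_A^{n+2}(P))}{|\lambda_1\lambda_N|} + \hat{h}_A(\varphi_A^n(P))
= \Bigl(\frac{1}{|\lambda_1|}+\frac{1}{|\lambda_N|}\Bigr)\hat{h}_A(\varphi_A^{n+1}(P)),
\]
and clearing denominators yields the claimed three-term linear recurrence. For the closed form, the characteristic polynomial $t^2 - (|\lambda_1|+|\lambda_N|)t + |\lambda_1\lambda_N|$ factors as $(t-|\lambda_1|)(t-|\lambda_N|)$, so the general solution is $\alpha|\lambda_1|^n + \beta|\lambda_N|^n$; solving the two-by-two linear system determined by the initial values at $n=0$ and $n=1$ then recovers the displayed formula.

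The only mild obstacle is the bookkeeping in the second paragraph: correctly composing $\varphi_{A'}$ with $\varphi_A$ and checking that the $D$-homogeneity survives the limsup that defines $\hat{h}_A^\pm$. Once these two facts are in place, everything reduces to standard linear-recurrence solution theory and the verification of the two initial cases $n=0,1$, which is immediate.
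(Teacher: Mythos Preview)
Your proposal is correct and follows essentially the same approach as the paper: both apply Theorem~\ref{thm:unique}(1) at $\varphi_A^{n+1}(P)$, use $\varphi_{A'}\circ\varphi_A=\varphi_{D\cdot I_N}$ together with the $D$-homogeneity $\hat{h}_A(Q^D)=D\,\hat{h}_A(Q)$ to reduce the $\varphi_{A'}$ term, and then solve the resulting linear recurrence via its characteristic polynomial and the initial data at $n=0,1$.
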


\begin{proof}
Notice that
\[
\hat{h}_A(\varphi_{D\cdot I_N}(P)) = D\cdot \hat{h}_A(P).
\]
If we replace the $P$ in Theorem~\ref{thm:unique} by $\varphi_A^{n+1}(P)$.
Then, we have the following
\begin{eqnarray*}
0
& = & \frac{\hat{h}_{A}(\varphi_A^{n+2}(P))}{|\lambda_1\lambda_N|}
    -\Bigl(\frac{1}{|\lambda_1|}+\frac{1}{|\lambda_N|}\Bigr)\cdot \hat{h}_{A}(\varphi_A^{n+1}(P))\\
&   &  + \frac{\hat{h}_{A} ( \varphi_{A'}(\varphi_A^{n+1}(P)) )}{D}\\
& = & \frac{\hat{h}_{A}(\varphi_A^{n+2}(P))}{|\lambda_1\lambda_N|}
    -\Bigl(\frac{1}{|\lambda_1|}+\frac{1}{|\lambda_N|}\Bigr)\cdot \hat{h}_{A}(\varphi_A^{n+1}(P))+
    \hat{h}_{A}(\varphi_A^{n}(P)).
\end{eqnarray*}
Hence
\[
\hat{h}_A(\varphi_A^{n+2}(P))
-(|\lambda_1|+|\lambda_N|)\hat{h}_A(\varphi_A^{n+1}(P))
+|\lambda_1\lambda_N| \hat{h}_A(\varphi_A^{n}(P))
=0.
\]

Define the sequence $H_n:=\hat{h}_A(\varphi_A^{n}(P))$, then $H_n=c_1|\lambda_1|^n+c_2|\lambda_N|^n$ by the recurrence relation.
We can solve $c_1$ and $c_2$ by the initial data $H_0=\hat{h}_A(P),H_1=\hat{h}_A(\varphi_A(P))$, and obtain that
\[
c_1=\frac{H_1-|\lambda_N|H_0}{|\lambda_1|-|\lambda_N|},\quad
c_2=\frac{|\lambda_1|H_0-H_1}{|\lambda_1|-|\lambda_N|}.
\]
This gives the above expression for $H_n=\hat{h}_A(\varphi_A^{n}(P))$.
\end{proof}

\begin{theorem}
\label{theorem:LB1}
Suppose $A \in \Mat_N^+(\ZZ)$ is diagonalizable.
\begin{enumerate}
\item If all the eigenvalues of $A$ have the same modulus, then
\[
\hat{h}_A^+(P)\geq h(P).
\]
\item If the eigenvalues of $A$ have only two different modulus, then
\[
\hat{h}_A(P)\geq h(P).
\]
\end{enumerate}
\end{theorem}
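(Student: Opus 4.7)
The plan is to approach the rescaled iterates $A^n/\delta_A^n$ by a spectral projection along a suitable subsequence of $n$, and to combine this with a subadditivity property of an auxiliary ``relative height'' associated to a real matrix. Since $A$ is diagonalizable, all Jordan blocks have size $1$, so $\ell_A=0$ and
\[
\hat{h}_A^+(P)=\limsup_{n\to\infty}\frac{h(\varphi_A^n(P))}{\delta_A^n}.
\]
Fix $P=(x_1,\ldots,x_N)\in\GG_m^N(\Qbar)$ and let $K$ be a number field containing its coordinates. For any real matrix $M=(m_{ij})$ we define
\[
H_P(M):=\sum_{v\in M_K}\max_{1\le i\le N}\Bigl\{0,\,\sum_{j=1}^{N} m_{ij}\log\|x_j\|_v\Bigr\}.
\]
When $M$ is an integer matrix this coincides with $h(\varphi_M(P))$, and by positive homogeneity $H_P(A^n/\delta_A^n)=h(\varphi_A^n(P))/\delta_A^n$, while $H_P(I)=h(P)$. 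Two elementary properties of $H_P$ will drive the argument. First, continuity in $M$: since $P$ lies in a number field, only finitely many places $v$ contribute, so $H_P$ is continuous on $\Mat_N(\RR)$. Second, subadditivity $H_P(M_1+M_2)\le H_P(M_1)+H_P(M_2)$, which follows at once from the pointwise inequality $\max_i\{0,L_i+L_i'\}\le \max_i\{0,L_i\}+\max_i\{0,L_i'\}$.

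Write $A=B\Lambda B^{-1}$ with $\Lambda=\diag(\lambda_1,\ldots,\lambda_N)$. Kronecker's theorem on simultaneous diophantine approximation produces a subsequence $n_k\to\infty$ with $(\lambda_i/\delta_A)^{n_k}\to 1$ for every $i$ satisfying $|\lambda_i|=\delta_A$; automatically $(\lambda_j/\delta_A)^{n_k}\to 0$ whenever $|\lambda_j|<\delta_A$. Consequently $A^{n_k}/\delta_A^{n_k}\to \Pi_\alpha$, where $\Pi_\alpha$ denotes the spectral projection of $A$ onto its $\delta_A$-eigenspace along the remaining eigenspace; it is a real matrix because the dominant eigenvalues of $A$ are closed under complex conjugation. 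Continuity of $H_P$ then yields
\[
\hat{h}_A^+(P)\;\ge\;\lim_{k\to\infty} H_P(A^{n_k}/\delta_A^{n_k})\;=\;H_P(\Pi_\alpha).
\]

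Part (1) now follows at once: when all eigenvalues share the modulus $\delta_A$, $\Pi_\alpha=I$, so $\hat{h}_A^+(P)\ge H_P(I)=h(P)$. For part (2), let $\alpha=\delta_A>\beta$ be the two moduli and $\Pi_\alpha,\Pi_\beta$ the spectral projections of $A$ onto the $\alpha$- and $\beta$-eigenspaces; these are complementary, so $\Pi_\alpha+\Pi_\beta=I$. Applying the same construction to $A'=|\det A|\cdot A^{-1}$ (which is diagonalizable with the same eigenvectors as $A$, and whose largest-modulus eigenvalues are $|\det A|/\lambda_i$ with $|\lambda_i|=\beta$, corresponding to the $\beta$-eigenspace of $A$) gives $\hat{h}_A^-(P)=\hat{h}_{A'}^+(P)\ge H_P(\Pi_\beta)$. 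Subadditivity of $H_P$ then concludes
\[
h(P)\;=\;H_P(\Pi_\alpha+\Pi_\beta)\;\le\;H_P(\Pi_\alpha)+H_P(\Pi_\beta)\;\le\;\hat{h}_A^+(P)+\hat{h}_A^-(P)\;=\;\hat{h}_A(P).
\]

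The main technical step is the Kronecker simultaneous approximation producing a single subsequence $n_k$ along which all the unit-circle ratios $(\lambda_i/\delta_A)^n$ tend to $1$ at once; this is standard, the closure of the orbit being a closed subgroup of a torus that contains the identity. The remaining verifications---that the limiting projections are real and that $H_P$ satisfies continuity, positive homogeneity, and subadditivity---are elementary.
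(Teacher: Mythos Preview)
Your proof is correct and follows essentially the same route as the paper's: choose a subsequence along which the rescaled iterates $A^{n_k}/\delta_A^{n_k}$ converge to the spectral projection onto the dominant eigenspace, do the same for $A'$ to pick up the complementary projection, and then use subadditivity of the place-by-place maximum together with $\Pi_\alpha+\Pi_\beta=I$ to bound $h(P)$ from above. Your packaging via the auxiliary functional $H_P$ and the explicit appeal to Kronecker for the simultaneous approximation make the limit-interchange and subadditivity steps a bit more transparent than in the paper, but the argument is the same.
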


\begin{proof}
Write the matrix $A$ as $A=B\Lambda B^{-1}$, where
\[
\Lambda=\text{diag}(\lambda_1,\ldots,\lambda_N)
\]
is diagonal with $|\lambda_1|\geq\ldots\geq|\lambda_N|$.

For part (1), we have $|\lambda|=|\lambda_1|=\ldots=|\lambda_N|$.
We can find a sequence of positive integer $\{n_r\}_{r=1}^\infty$ such that
\[
A^{n_r}/|\lambda|^{n_r} \rightarrow I_N.
\]
Therefore, we have the following:
\begin{eqnarray*}
\hat{h}_A^+(P)
& = & \limsup_{n \rightarrow \infty}\frac{h(\varphi_A^n(P))}{|\lambda|^n}\\
& \geq & \lim_{r \rightarrow \infty}\frac{h(\varphi_A^{n_r}(P))}{|\lambda|^{n_r}}\\
& = & \lim_{r \rightarrow \infty} \sum_{v \in M_K} \max_{1\leq i \leq N}
        \Bigl\{0, \sum_{j=1}^N \frac{a_{i,j}(n_r)}{|\lambda|^{n_r}}\log\|x_j\|_v\Bigr\} \\
& = & \sum_{v \in M_K} \max_{1\leq i \leq N} \{0, \log\|x_i\|_v\} \\
& = & h(P).
\end{eqnarray*}

For (2), assume $|\lambda_1|=\ldots=|\lambda_{i_0}|=|\lambda|>|\mu|=|\lambda_{i_0+1}|=\ldots=|\lambda_N|$
where $1\le i_0<N$.
We can find a sequence of positive integer $\{n_r\}_{r=1}^\infty$ such that
\[
A^{n_r}/|\lambda|^{n_r} \rightarrow B \;
\diag(\underbrace{1,\ldots,1}_{i_0},\underbrace{0,\ldots,0}_{N-i_0})
B^{-1}=(\alpha_{i,j}).
\]
Therefore, we have the following:
\begin{eqnarray*}
\hat{h}_A^+(P)
& = & \limsup_{n \rightarrow \infty}\frac{h(\varphi_A^n(P))}{|\lambda|^n}\\
& \geq & \lim_{r \rightarrow \infty}\frac{h(\varphi_A^{n_r}(P))}{|\lambda|^{n_r}}\\
& = & \lim_{r \rightarrow \infty} \sum_{v \in M_K} \max_{1\leq i \leq N}
        \{0, \sum_{j=1}^N \frac{a_{i,j}(n_r)}{|\lambda|^{n_r}}\log\|x_j\|_v\} \\
& = & \sum_{v \in M_K} \max_{1\leq i \leq N} \{0, \sum_{j=1}^N \alpha_{i,j}\log\|x_j\|_v\}.
\end{eqnarray*}
Notice that $\delta_{A'}=|\lambda^{i_0}\mu^{N-i_0-1}|$.
We can find another sequence of positive integers $\{n_s\}_{s=1}^\infty$ such that
\[
A'^{n_s}/\delta_{A'}^{n_s} \rightarrow B
\diag(\underbrace{0,\ldots,0}_{i_0},\underbrace{1,\ldots,1}_{N-i_0})
B^{-1}=(\beta_{i,j}).
\]
Therefore, we also have
\begin{eqnarray*}
\hat{h}_A^-(P)
& = & \hat{h}_{A'}^+(P) \\
& = & \limsup_{n \rightarrow \infty}\frac{h(\varphi_{A'}^n(P))}{\delta_{A'}^n}\\
& \geq & \lim_{r \rightarrow \infty}\frac{h(\varphi_{A'}^{n_r}(P))}{\delta_{A'}^{n_r}}\\
& = & \lim_{r \rightarrow \infty} \sum_{v \in M_K} \max_{1\leq i \leq N}
        \{0, \sum_{j=1}^N \frac{a'_{i,j}(n_r)}{\delta_{A'}^{n_r}}\log\|x_j\|_v\} \\
& = & \sum_{v \in M_K} \max_{1\leq i \leq N} \{0, \sum_{j=1}^N \beta_{i,j}\log\|x_j\|_v\}.
\end{eqnarray*}
One has the relation $(\alpha_{i,j})+(\beta_{i,j})=I_N$, hence
\begin{eqnarray*}
\hat{h}_A(P)
& = & \hat{h}_A^+(P)+\hat{h}_A^-(P) \\
& \geq & \sum_{v \in M_K} \max_{1\leq i \leq N} \{0, \sum_{j=1}^N \alpha_{i,j}\log\|x_j\|_v\}+\\
& &        \sum_{v \in M_K} \max_{1\leq i \leq N} \{0, \sum_{j=1}^N \beta_{i,j}\log\|x_j\|_v\}\\
& \geq & \sum_{v \in M_K} \max_{1\leq i \leq N} \{0, \sum_{j=1}^N (\alpha_{i,j}+\beta_{i,j})\log\|x_j\|_v\}\\
& \geq & \sum_{v \in M_K} \max_{1\leq i \leq N} \{0, \log\|x_i\|_v\}\\
& = & h(P)
\end{eqnarray*}
This concludes the proof.
\end{proof}

The bound in Theorem~\ref{theorem:LB1} has some nice consequences.

\begin{corollary}[Northcott finiteness property]
If $A$ satisfies either (1) or (2) of Theorem~\ref{theorem:LB1}, then
for a bounded degree and bounded total canonical height, there are only finitely many points in $\GG_m^N(\Qbar)$
within these bounds. More precisely, given any $B>0$ and $D>0$, we have
\[
\#\Bigl\{P\in\GG_m^N(\Qbar) \Bigl| [\QQ(P):\QQ]\le D\text{ and } \hat{h}_A(P)\le B \Bigr\} < \infty.
\]
\end{corollary}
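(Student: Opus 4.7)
The plan is to reduce this directly to the classical Northcott finiteness property for the standard Weil height on $\GG_m^N(\Qbar) \subset \PP^N(\Qbar)$. The only work is to show that the hypothesis $\hat{h}_A(P) \le B$ forces $h(P) \le B$ under either assumption (1) or (2) of Theorem~\ref{theorem:LB1}.

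First I would dispose of case (2): Theorem~\ref{theorem:LB1}(2) gives directly $\hat{h}_A(P) \ge h(P)$, so $h(P) \le B$ is immediate. For case (1), Theorem~\ref{theorem:LB1}(1) only yields the forward bound $\hat{h}_A^+(P) \ge h(P)$; however, by part (1) of the proposition preceding Theorem~\ref{thm:unique}, the backward canonical height satisfies $\hat{h}_A^-(P) \ge 0$, so
\[
\hat{h}_A(P) \;=\; \hat{h}_A^+(P) + \hat{h}_A^-(P) \;\ge\; \hat{h}_A^+(P) \;\ge\; h(P),
\]
and again $h(P) \le B$.

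Having reduced the problem to the assertion that
\[
\bigl\{ P\in\GG_m^N(\Qbar) \,\bigl|\, [\QQ(P):\QQ]\le D,\ h(P)\le B \bigr\}
\]
is finite, I would invoke the classical Northcott theorem for the standard Weil height on $\PP^N(\Qbar)$ (applied via the open embedding $\GG_m^N \hookrightarrow \PP^N$, say $(x_1,\ldots,x_N)\mapsto [1:x_1:\cdots:x_N]$), which says that points of bounded degree and bounded naive height form a finite set. This closes the argument.

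There is no real obstacle here; the content of the corollary is entirely in Theorem~\ref{theorem:LB1}, and the only thing one has to be careful about is remembering that in case (1) the statement of the theorem is phrased in terms of $\hat{h}_A^+$ rather than $\hat{h}_A$, so one needs the trivial observation that $\hat{h}_A^-\ge 0$ to upgrade it.
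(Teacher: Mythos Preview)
Your proof is correct and is exactly the argument the paper intends: the corollary is stated there without proof, as an immediate consequence of Theorem~\ref{theorem:LB1} together with the classical Northcott theorem for the Weil height. Your observation that in case~(1) one must pass from $\hat{h}_A^+(P)\ge h(P)$ to $\hat{h}_A(P)\ge h(P)$ via $\hat{h}_A^-(P)\ge 0$ is the only detail to check, and you handle it correctly.
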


Since the total canonical height function is bounded below by the usual height function, any lower bound on
$h(P)$ will immediately induce a lower bound on $\hat{h}_A(P)$. This is summarized in the following corollary.
For more about Lehmer type lower bounds, see~\cite{AmDv, do}.
\begin{corollary}
If $A$ satisfies either (1) or (2) of Theorem~\ref{theorem:LB1}, then any Lehmer type lower bound for
$h(P)$ will induce a Lehmer type bound for $\hat{h}_A(P)$,
and the bound does not depend on $A$.
\end{corollary}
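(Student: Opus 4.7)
The plan is to observe that this corollary is an essentially immediate consequence of Theorem~\ref{theorem:LB1}, with a single additional observation needed to handle case~(1). A Lehmer type lower bound is, by definition, an inequality of the shape
\[
h(P) \ge L\bigl([\QQ(P):\QQ]\bigr)
\]
valid for every non-torsion $P \in \GG_m^N(\Qbar)$, where $L$ is a positive decreasing function (classically something like $L(D) = c/D$). Crucially, such a bound depends only on $P$ (through its degree over $\QQ$) and not on any auxiliary dynamical data.

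First I would reduce both hypotheses to the common inequality $\hat{h}_A(P) \ge h(P)$. In case~(2) this is literally the content of Theorem~\ref{theorem:LB1}(2). In case~(1) the theorem gives only $\hat{h}_A^+(P) \ge h(P)$, so I would invoke the nonnegativity of the backward canonical height $\hat{h}_A^-(P) = \hat{h}_{A'}^+(P) \ge 0$, already recorded as property~(1) of the canonical height in Section~2, to conclude
\[
\hat{h}_A(P) \;=\; \hat{h}_A^+(P) + \hat{h}_A^-(P) \;\ge\; h(P).
\]

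Second, I would simply chain the two inequalities: assuming any Lehmer type bound $h(P) \ge L\bigl([\QQ(P):\QQ]\bigr)$, one immediately gets
\[
\hat{h}_A(P) \;\ge\; h(P) \;\ge\; L\bigl([\QQ(P):\QQ]\bigr),
\]
which is a Lehmer type bound for the total canonical height. Since the function $L$ came from a bound on the usual height $h$, it involves no data from $A$, so the resulting bound is uniform in $A$ across the entire class of matrices covered by Theorem~\ref{theorem:LB1}. There is no real obstacle: the work was entirely done in Theorem~\ref{theorem:LB1}, and the corollary is just a packaging statement. The only thing to be careful about is to spell out the case~(1) reduction via $\hat{h}_A^- \ge 0$, which I would include as a one-line remark in the proof.
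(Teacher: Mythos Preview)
Your proposal is correct and matches the paper's approach exactly: the paper does not even give a formal proof, merely noting in the sentence preceding the corollary that since $\hat{h}_A(P) \ge h(P)$, any lower bound on $h(P)$ immediately transfers to $\hat{h}_A(P)$. Your added remark that case~(1) requires the trivial step $\hat{h}_A = \hat{h}_A^+ + \hat{h}_A^- \ge \hat{h}_A^+ \ge h$ is a welcome clarification that the paper leaves implicit.
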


Before we state the next property for $\hat{h}_A$, we need to introduce some notation.
Two real-valued functions $\lambda$ and $\lambda'$ on $\GG_m^N(\Qbar)$ are said to be equivalent
if there exist positive constant $C_1, C_2$ such that
$$
C_1\lambda(x) \leq \lambda'(x) \leq C_2\lambda(x) \text{ for all $x \in \GG_m^N(\Qbar)$}.
$$
We use the notation $\lambda \asymp \lambda'$ to denote this equivalence.

\begin{corollary}
\label{prop:asymp}
Suppose $A \in \Mat_N^+(\mathbb{Z})$ satisfies either (1) or (2) of Theorem~\ref{theorem:LB1}, and $\rho(A)>1$.
Then $\hat{h}_{A} \asymp h$ on $\GG_m^N(\Qbar)$.
\end{corollary}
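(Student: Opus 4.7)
The plan is to combine the lower bound from Theorem~\ref{theorem:LB1} with a matching upper bound of the form $\hat{h}_A(P) \leq C\cdot h(P)$. Theorem~\ref{theorem:LB1} immediately gives $h(P) \leq \hat{h}_A(P)$, so one side of the equivalence $\asymp$ holds with constant $1$. All the work is in producing a constant $C>0$, depending only on $A$, with $\hat{h}_A(P) \leq C\cdot h(P)$ for every $P \in \GG_m^N(\Qbar)$.

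For the upper bound, I would treat $\hat{h}_A^+$ and $\hat{h}_A^- = \hat{h}_{A'}^+$ separately and add. Starting from the explicit expression
\[
h(\varphi_A^n(P)) = \sum_{v\in M_K} \max_{1\le i\le N}\Bigl\{0,\ \sum_{j=1}^N a_{i,j}(n)\log\|x_j\|_v\Bigr\},
\]
the first step is to estimate $h(\varphi_A^n(P)) \leq N\cdot \|A^n\|_\infty \cdot \sum_v \max_j |\log\|x_j\|_v|$. Then, using the identity $\sum_v |\log\|x_j\|_v| = 2h(x_j) \leq 2h(P)$ and summing in $j$, one gets $h(\varphi_A^n(P)) \leq 2N^2\|A^n\|_\infty\cdot h(P)$. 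The Jordan form bound $\|A^n\|_\infty = O(n^{\ell_A}\delta_A^n)$ (this is exactly the input that also produces $\ell_A$) then yields, after dividing by $n^{\ell_A}\delta_A^n$ and taking $\limsup$, an inequality $\hat{h}_A^+(P) \leq C_1\cdot h(P)$. Running the same argument for $\varphi_{A'}$ gives $\hat{h}_A^-(P)\leq C_2\cdot h(P)$, and summation yields the desired upper bound with $C = C_1 + C_2$.

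A brief remark is needed to handle $h(P)=0$: by Kronecker's theorem, $h(P)=0$ forces every coordinate of $P$ to be a root of unity, so $P$ is preperiodic under both $\varphi_A$ and $\varphi_{A'}$, and hence $\hat{h}_A(P)=0$ as well, so the equivalence $\asymp$ holds trivially at such points.

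There is no real obstacle — every step is routine — and the only mild care is in step two, where one must correctly pass from the signed quantities $\log\|x_j\|_v$ to a uniform bound in terms of $h(P)$ via the identity $\sum_v|\log\|x_j\|_v| = 2h(x_j)$. The hypothesis $\rho(A)>1$ enters only implicitly through the fact that $\delta_A>1$ makes the normalizing factor $n^{\ell_A}\delta_A^n$ genuinely absorb the growth of $\|A^n\|_\infty$, so that the upper bound constant $C$ is finite.
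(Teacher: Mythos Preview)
Your proposal is correct and follows the same two-step structure as the paper's proof: the lower bound $h(P)\le \hat h_A(P)$ comes directly from Theorem~\ref{theorem:LB1}, and the upper bound $\hat h_A(P)\le C\,h(P)$ is then combined with it. The only difference is that the paper obtains the upper bound by invoking \cite[Proposition~24]{S1} for each of $\hat h_A^+$ and $\hat h_A^-=\hat h_{A'}^+$, whereas you reprove that bound from scratch via the estimate $h(\varphi_A^n(P))\le 2N^2\|A^n\|_\infty\,h(P)$ together with the Jordan-form growth $\|A^n\|_\infty=O(n^{\ell_A}\delta_A^n)$; your argument is essentially an inline proof of Silverman's proposition in the monomial setting, and your remark on the case $h(P)=0$ is a welcome point of care that the paper leaves implicit.
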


\begin{proof}
By Theorem \ref{theorem:LB1},
$h(P) \leq \hat{h}_{A}(P) \text{ for all } P \in \GG_m^N(\Qbar)$, so we can simply let $C_1(A)=1$.
Also, there exists a $C_2(A)$ by \cite[Proposition 24]{S1} such that
$$
\hat{h}_{A}^\pm(P) \leq C_2(A)h(P) \text{ for all } P \in \GG_m^N(\Qbar).
$$
Hence, we proved that $\hat{h}_{A} \asymp h$ on $\GG_m^N(\Qbar)$.
\end{proof}

Conditions (1) or (2) in Theorem~\ref{theorem:LB1} are quite restricted in general. However, they include
(almost) all diagonalizable
cases in dimension two, and some major cases in dimensions three and four.

\begin{corollary}
Let $A \in \Mat_2^+(\mathbb{Z})$ be diagonalizable, and $\rho(A)>1$,
then $\varphi_A$ satisfies all the conclusions from Theorem~\ref{theorem:LB1} to Corollary~\ref{prop:asymp}.
\end{corollary}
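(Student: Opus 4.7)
The plan is to observe that the corollary reduces to an elementary case analysis on the two eigenvalues of $A$, after which the desired conclusions follow directly from Theorem~\ref{theorem:LB1} and the corollaries that build on it.

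Let $\lambda_1,\lambda_2$ denote the eigenvalues of $A$, counted with multiplicity. Since $A\in\Mat_2^+(\ZZ)$ is diagonalizable, we consider three subcases. First, if $A$ has a repeated eigenvalue, then diagonalizability forces $A=\lambda_1 I_2$, so $|\lambda_1|=|\lambda_2|$. Second, if $\lambda_1,\lambda_2$ are a non-real complex conjugate pair, then $|\lambda_1|=|\lambda_2|$ automatically. Third, if $\lambda_1\neq\lambda_2$ are both real, then either their moduli coincide (for example when $\lambda_2=-\lambda_1$) or they differ.

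In the first two situations, and in the real case when the moduli coincide, $A$ satisfies hypothesis~(1) of Theorem~\ref{theorem:LB1}. In the remaining real case the two eigenvalues of $A$ realize exactly two distinct moduli, so $A$ satisfies hypothesis~(2) of Theorem~\ref{theorem:LB1}. Hence in every case one of the two hypotheses of Theorem~\ref{theorem:LB1} is met, which is precisely the input required by the subsequent Northcott finiteness corollary, the Lehmer-type corollary, and Corollary~\ref{prop:asymp} (the $\rho(A)>1$ hypothesis carries over from the statement of the present corollary, so nothing further needs to be checked).

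There is essentially no obstacle here: the only thing to verify is that the trichotomy above is exhaustive, which is immediate from the fact that a $2\times 2$ matrix has only two eigenvalues and that an integer matrix with a non-real eigenvalue must have its complex conjugate as the other eigenvalue. I would keep the proof to two or three sentences, simply recording the case split and then invoking Theorem~\ref{theorem:LB1} together with the corollaries that follow from it.
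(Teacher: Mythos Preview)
Your proposal is correct and matches the paper's approach: the paper gives no proof at all for this corollary, implicitly treating it as the immediate observation that two eigenvalues have either one or two distinct moduli, so one of the hypotheses of Theorem~\ref{theorem:LB1} always holds. Your case split is slightly more detailed than necessary (it suffices to note $|\lambda_1|=|\lambda_2|$ or $|\lambda_1|\neq|\lambda_2|$), but the content is the same.
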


\begin{corollary}
\mbox{}
\begin{enumerate}
\item Given $A\in\Mat_3^+(\ZZ)$, suppose that $A$ has complex eigenvalues and $\rho(A)>1$,
then $\varphi_A$ satisfies all the conclusions from Theorem~\ref{theorem:LB1} to Corollary~\ref{prop:asymp}.
\item Suppose $A\in\Mat_4^+(\ZZ)$ is diagonalizable, with eigenvalues two pairs of conjugate complex numbers, and $\rho(A)>1$,
then $\varphi_A$ satisfies all the conclusions from Theorem~\ref{theorem:LB1} to Corollary~\ref{prop:asymp}.
\end{enumerate}
\end{corollary}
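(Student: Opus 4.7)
The plan is to reduce each case to one of the two hypotheses of Theorem~\ref{theorem:LB1}, after which all the stated consequences (Theorem~\ref{theorem:LB1} itself, the two Northcott/Lehmer corollaries, and Corollary~\ref{prop:asymp}) follow immediately by invocation. The substantive point is the elementary observation that, for a matrix with rational (in particular real) characteristic polynomial, non-real eigenvalues come in complex-conjugate pairs, and such pairs share the same modulus; combined with the dimension count in each part, this forces the multiset of moduli of the eigenvalues to contain at most two distinct values.

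For part (1), suppose $A\in\Mat_3^+(\ZZ)$ has at least one non-real eigenvalue $\lambda$. Since the characteristic polynomial of $A$ has rational coefficients, $\bar\lambda$ is also an eigenvalue, and the remaining third eigenvalue $\mu$ is forced to be real. The three eigenvalues $\lambda,\bar\lambda,\mu$ are pairwise distinct (the first two because $\lambda\notin\RR$, and $\mu$ is distinct from $\lambda,\bar\lambda$ because $\mu\in\RR$), so $A$ is diagonalizable over $\CC$. The moduli of its eigenvalues form the multiset $\{|\lambda|,|\lambda|,|\mu|\}$, which consists of either one or two distinct values. Accordingly, either condition (1) or condition (2) of Theorem~\ref{theorem:LB1} is satisfied.

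For part (2), by hypothesis $A\in\Mat_4^+(\ZZ)$ is diagonalizable with eigenvalues $\lambda_1,\bar\lambda_1,\lambda_2,\bar\lambda_2$. Since $|\lambda_j|=|\bar\lambda_j|$ for $j=1,2$, the set of moduli equals $\{|\lambda_1|,|\lambda_2|\}$, again containing at most two distinct values, and so one of conditions (1) or (2) of Theorem~\ref{theorem:LB1} again holds.

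In both parts, Theorem~\ref{theorem:LB1} then gives $\hat h_A(P)\ge h(P)$ on $\GG_m^N(\Qbar)$ (in the single-modulus subcase this uses $\hat h_A\ge\hat h_A^+$), and together with $\rho(A)>1$ the remaining conclusions, Corollary~13, Corollary~14, and Corollary~\ref{prop:asymp}, follow verbatim from their statements. There is no genuine obstacle; the only thing to be careful about is to handle, in each part, the boundary subcase where all eigenvalue moduli happen to coincide, and to observe that Corollary~\ref{prop:asymp} is the only consequence that requires the extra hypothesis $\rho(A)>1$, which is assumed throughout.
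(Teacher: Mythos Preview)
Your proposal is correct and is exactly the intended argument: the paper states this corollary without proof, treating it as an immediate reduction to Theorem~\ref{theorem:LB1} via the observation that complex-conjugate eigenvalue pairs share a modulus, which in dimensions $3$ and $4$ forces at most two distinct moduli. Your added care in part~(1), noting explicitly that the three eigenvalues are distinct so that $A$ is automatically diagonalizable, fills in the one detail the paper leaves implicit.
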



\section{Monomial Maps associated to Non-diagonalizable Matrices}
\label{sec:non_diag}

\subsection{Tow-dimensional non-diagonalizable matrices}

Both the canonical height function and the totally canonical height function will encounter some problems
in the non-diagonalizable cases, even in dimension two. We will first illustrate the problems by a concrete example, then we
will show what happens in general.

\begin{example}
Consider
$A=
\left(
  \begin{array}{cc}
    2 & 1 \\
    0 & 2 \\
  \end{array}
\right)
, P=(x,y)$, so $\varphi_A(x,y)=(x^2y, y^2)$.
We have the following
\[
\hat{h}_A^+(P) = \frac{h(y)}{2} = \hat{h}_A^-(P) \text{ and }
\hat{h}_A(P) = 2 \hat{h}_A^+(P) = h(y).
\]
We observe the following two phenomena.
\begin{enumerate}
  \item  If $P=(x,y)$ with $y$ a root of unity, for instance, $P=(2,1)$,
         then $\hat{h}_A(P)=\hat{h}_A^+(P)=0$ but $P$ may not be a preperiodic point.
  \item Since both the canonical height and the total canonical height only depend on $y$, the Northcott finiteness property
        obviously fail in this example.
\end{enumerate}
\end{example}

We remark that for (1), given a non-diagonalizable $2\times 2$ matrix $A$ with $\rho(a)>1$, the points $P\in\GG_m^2(\Qbar)$ such that
$ \hat{h}_{A}^{+}(P)=0 $ is characterized by Silverman.

\begin{theorem}[Silverman~\cite{S1}]
\label{thm_zero_height_by_Silverman_in_dim_2}
If $A=
\left(
  \begin{array}{cc}
    a & b \\
    c & d \\
  \end{array}
\right)
\in\Mat_2^+(\ZZ)$ is not diagonalizable, and $\lambda$ is the eigenvalue of $A$, then
$\hat{h}_{A}^{+}(P)=0$
if and only if one of the following conditions is true:
\begin{enumerate}
  \item[(i)] $P \in \preper(\varphi_A)$
  \item[(ii)] The coordinates of $P=(x,y)$ satisfy:
  \[
  \left\{
    \begin{array}{ll}
      x^{c}y^{d-\lambda} \text{ is a root of unity,} & \hbox{if $c \neq 0$;} \\
      x^{a-\lambda}y^{b} \text{ is a root of unity,} & \hbox{if $c = 0$.}
    \end{array}
  \right.
  \]
\end{enumerate}
\end{theorem}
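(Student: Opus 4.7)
My plan is to put $A$ in Jordan form, extract the leading-order asymptotics of $A^{n}/(n\lambda^{n-1})$, and then reduce the vanishing of $\hat{h}_{A}^{+}(P)$ to a Kronecker-type statement at every place. Since the characteristic polynomial $t^{2}-(a+d)t+(ad-bc)$ has a double root, $\lambda=(a+d)/2$ and $\lambda^{2}=ad-bc$, which together force $\lambda\in\ZZ$. Non-diagonalisability gives $A=BJB^{-1}$ where $J=\lambda I + E$ is the single $2\times 2$ Jordan block at $\lambda$, with $E$ the upper-triangular nilpotent having a $1$ in the $(1,2)$-entry. When $c\ne 0$ I take the eigenvector $\xi=(\lambda-d,c)^{T}$ and the generalised eigenvector $\eta=(1,0)^{T}$ (which satisfies $(A-\lambda I)\eta=\xi$, because $a-\lambda=\lambda-d$) as the columns of $B$, so $\det B=-c$. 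When $c=0$, the repeated root forces $a=d=\lambda$ and $b\ne 0$, and $A=\lambda I+bE$ is already in this form.

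From $J^{n}=\lambda^{n}I+n\lambda^{n-1}E$ one obtains $A^{n}/(n\lambda^{n-1})\to\alpha:=BEB^{-1}$, a rank-$1$ matrix. A direct calculation gives $\alpha=\xi\mu/\det B$ with $\mu=(-c,\lambda-d)$ in the first case, and $\alpha=bE$ in the second. Expanding
\[
h(\varphi_{A}^{n}(P))=\sum_{v\in M_{K}}\max\Bigl\{0,\sum_{j=1}^{2}a_{1j}(n)\log\|x_{j}\|_{v},\sum_{j=1}^{2}a_{2j}(n)\log\|x_{j}\|_{v}\Bigr\},
\]
dividing by $n|\lambda|^{n}$, and using $a_{ij}(n)/(n|\lambda|^{n})=\mathrm{sign}(\lambda)^{n-1}\alpha_{ij}/|\lambda|+o(1)$, the ratio has at most two subsequential limits, corresponding to the parity of $n$ when $\lambda<0$.

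In the case $c\ne 0$, applying $\alpha$ to $(\log\|x\|_{v},\log\|y\|_{v})^{T}$ yields $-L_{v}/c\cdot\xi$, where
\[
L_{v}=-c\log\|x\|_{v}+(\lambda-d)\log\|y\|_{v}=\log\bigl\|x^{-c}y^{\lambda-d}\bigr\|_{v}.
\]
The normalised height therefore tends to $(1/|\lambda|)\sum_{v}\max\{0,\pm L_{v}(\lambda-d)/c,\pm L_{v}\}$, with signs governed by $n\bmod 2$. For $\hat{h}_{A}^{+}(P)=0$ both subsequential limits (or the single one when $\lambda>0$) must vanish; non-negativity of each summand, combined with the product-formula identity $\sum_{v}L_{v}=0$, forces $L_{v}\equiv 0$. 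Kronecker's theorem then identifies $x^{-c}y^{\lambda-d}$, equivalently $x^{c}y^{d-\lambda}$, as a root of unity, giving condition~(ii). When $c=0$ the computation collapses to $\hat{h}_{A}^{+}(P)=(|b|/|\lambda|)\,h(y)$, which is zero iff $y$, and hence $x^{a-\lambda}y^{b}=y^{b}$, is a root of unity. Finally, condition~(i) is automatically absorbed into (ii): since $|\lambda|>1$, $\lambda$ is not a root of unity, so by \cite[Prop.~20(d)]{S1} every point in $\preper(\varphi_{A})$ has all coordinates roots of unity.

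The main obstacle I anticipate is the sign oscillation $\mathrm{sign}(\lambda)^{n-1}$ when $\lambda<0$, which makes $A^{n}/(n|\lambda|^{n})$ oscillate between $\pm\alpha/|\lambda|$ rather than converge; one must track both parity subsequences and combine the opposite-sign inequalities on $L_{v}$ to conclude vanishing (in the $\lambda>0$ case, the product formula must be invoked to upgrade the one-sided bound $L_{v}\ge 0$ to $L_{v}=0$). A secondary bookkeeping point is choosing the basis so that the rank-$1$ limit matrix $\alpha$ acts through the single linear form $L_{v}$, so that the explicit root-of-unity condition in~(ii) emerges naturally rather than being hidden inside matrix entries.
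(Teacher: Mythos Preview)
Your approach is correct, and it is essentially the same method that the paper itself employs. Note that the paper does not give a self-contained proof of this particular theorem; it simply refers to \cite[Theorem~36]{S1}. However, immediately afterwards the paper proves the higher-dimensional generalisation (Theorem~\ref{thm:non-diag-ht} and its Corollary) by precisely your strategy: write $A=\lambda I+\Ncal$, observe that $A^{n}/(n^{\ell}|\lambda|^{n})\to \Ncal^{\ell}/(\ell!\,\lambda^{\ell})$ (up to the sign oscillation when $\lambda<0$), deduce that $\hat{h}_{A}^{+}(P)$ depends only on $\pi(P)=\varphi_{(A-\lambda I)^{\ell}}(P)$, and then invoke Kronecker's theorem. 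The paper explicitly remarks that for $N=2$ this corollary recovers Theorem~\ref{thm_zero_height_by_Silverman_in_dim_2}. Your computation of the rank-one limit $\alpha=BEB^{-1}$ and the linear form $L_{v}$ is just the explicit $N=2$ instance of $\pi=\varphi_{A-\lambda I}$.

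Two minor comments. First, your use of the product formula in the $\lambda>0$ case (to upgrade $L_{v}\le 0$ for all $v$ to $L_{v}\equiv 0$) is exactly right and is the one step the paper's later proof leaves implicit inside the Kronecker appeal; it is good that you flagged it. Second, your absorption of condition~(i) into condition~(ii) via \cite[Proposition~20(d)]{S1} relies on $|\lambda|>1$, which is not part of the displayed statement but is assumed in the surrounding text (``with $\rho(A)>1$''); you should state this hypothesis explicitly when you write up the proof.
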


For a proof, see \cite[Theorem~36]{S1}.

Following the general principle that ``Geometry Determines Arithmetic'', as stated in \cite[p.2]{HiSi}, we will study
the geometry of the map to see why this happens.

Notice that if we define the projection map $\pi_y : (\CC^*)^2 \to \CC^*$, $(x,y)\mapsto y$, then the monomial map $f_A$
is {\em semi-conjugate} to the power map $\varphi_2:y\mapsto y^2$, as illustrated in the following diagram.
\[
\xymatrix{
(\CC^*)^2\ar[d]_{\pi_y}\ar[r]^{\varphi_A}& (\CC^*)^2\ar[d]^{\pi_y}\\
\CC^*\ar[r]_{\varphi_2}  & \CC^*
}
\]
This means, the map $\varphi_A$ preserves the fibration defined by the map $\pi_y$. Then we have
$\hat{h}_A(P) = 2 \hat{h}_A^+(P) = h(\pi_y(P))$. That is, the height function actually only
captures the height growth behavior on the base of the fibration.
As a consequence, the points $P\in\GG_m^2(\Qbar)$
with $\hat{h}(P)=0$ are exactly those $P$ such that $\pi_2(P)$ is a preperiodic point.
Also, the set of points whose canonical height is bounded by $D>0$ contains all fibers $\pi_y^{-1}(y_0)$
such that $h(y_0)\le D$.

\subsection{Higher dimension}
In order to generalize the above observation, we assume $A\in\Mat_N^+(\ZZ)$ has only one eigenvalue $\lambda\in\ZZ$.
If $A$ is diagonalizable, then $A=\lambda\cdot I_N$. By Example~\ref{ex:easy} and \ref{ex:easy_too}, one obtains that
\begin{eqnarray*}
\hat{h}_{A}^{+}(P) = \hat{h}_{A}^{-}(P) &=&
\left\{
  \begin{array}{ll}
    h(P), &  \text{if $\lambda>0$;} \\
    \max\{h(P),h(P^{-1})\}, & \text{if $\lambda<0$.}
  \end{array}
\right.
\end{eqnarray*}

Now, suppose that $A$ is not diagonalizable, so by \cite[Theorem 6.2]{Lin1}, $(\ell_A+1)$ is the size of the largest
Jordan block of $A$. To simplify the notation, we write $\ell=\ell_A$.
Assume that there are $m$ Jordan blocks of size $(\ell+1)$, then the matrix $(A-\lambda I)^{\ell}$
has rank $m$. Let
\[
\pi = \varphi_{(A-\lambda I)^{\ell}} : (\CC^*)^N \to (\CC^*)^N.
\]
The monomial map $\pi$ is not dominant, and the image is an $m$-dimensional subtorus of $(\CC^*)^N$, denoted by $T$.
Moreover, since $A$ commutes with $A-\lambda I$, we know that the map $\varphi_A|_T$ of $\varphi_A$ restricting on $T$ is
surjective onto $T$, and the following diagram commutes.
\[
\xymatrix{
(\CC^*)^N\ar[d]_{\pi}\ar[r]^{\varphi_A}& (\CC^*)^N\ar[d]^{\pi}\\
 T \ar[r]_{\varphi_A|_T}  & T
}
\]
Geometrically, this means that the map $\varphi_A$ preserves the fibration defined by $\pi$. The next theorem shows that
the arithmetic of canonical height is indeed controlled by this fibration.

\begin{theorem}
\label{thm:non-diag-ht}
Under the above assumption and notation, we have
\begin{enumerate}
\item If $\lambda > 0$, then $\hat{h}_{A}^{+}(P) = \frac{h(\pi(P))}{\ell !\lambda^{\ell}}$, and
       $\hat{h}_{A}^{-}(P) = \frac{h(\pi(P)^{-1})}{\ell !\lambda^{\ell}}$.
\item If $\lambda < 0$, then $\hat{h}_{A}^{+}(P) = \hat{h}_{A}^{-}(P) = \frac{\max\{h(\pi(P)),h(\pi(P)^{-1})\}}{\ell !|\lambda|^{\ell}}$.
\end{enumerate}
\end{theorem}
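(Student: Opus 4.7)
The plan is to exploit the nilpotent decomposition $A = \lambda I_N + N$, where $N := A - \lambda I_N$ satisfies $N^{\ell+1} = 0$ and $N^\ell \neq 0$ ($\ell+1$ being the largest Jordan block size of $A$). Since $\lambda I_N$ and $N$ commute, the binomial theorem collapses $A^n$ to the \emph{finite} sum
\[
A^n = \sum_{k=0}^{\ell}\binom{n}{k}\lambda^{n-k} N^k,
\]
whose dominant term is $\binom{n}{\ell}\lambda^{n-\ell} N^\ell \sim (n^\ell/\ell!)\,\lambda^{n-\ell} N^\ell$. Writing $A^n = (a_{ij}(n))$, this yields the uniform asymptotic
\[
\frac{a_{ij}(n)}{n^\ell|\lambda|^n} = \frac{\text{sgn}(\lambda)^{n-\ell}}{\ell!\,|\lambda|^\ell}\,[N^\ell]_{ij} + O(1/n),
\]
so all lower-order binomial contributions will vanish in the limit.

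To handle the forward height I substitute this into
\[
\frac{h(\varphi_A^n(P))}{n^\ell|\lambda|^n} = \sum_{v \in M_K}\max_{1\leq i\leq N}\Bigl\{0, \sum_{j=1}^N \frac{a_{ij}(n)}{n^\ell|\lambda|^n}\log\|x_j\|_v\Bigr\}.
\]
When $\lambda > 0$ the sign factor is $+1$ for all large $n$, so $\limsup = \lim$ and uniform convergence lets me pass the limit through the finite sums, giving $\hat{h}_A^+(P) = h(\pi(P))/(\ell!\lambda^\ell)$ with $\pi = \varphi_{N^\ell}$. When $\lambda<0$ the factor $\text{sgn}(\lambda)^{n-\ell}$ alternates with the parity of $n$, so the ratios converge along even and odd subsequences to $\pm [N^\ell]_{ij}/(\ell!|\lambda|^\ell)$. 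Negating the exponent matrix inside $\max_i\{0,\sum_j(\cdot)\log\|x_j\|_v\}$ is precisely coordinate-inversion on $\pi(P)$, so the two subsequential limits equal $h(\pi(P))/(\ell!|\lambda|^\ell)$ and $h(\pi(P)^{-1})/(\ell!|\lambda|^\ell)$; both are attained, so $\limsup$ equals their maximum, establishing the forward piece of (2).

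For the backward piece I use $\hat{h}_A^-(P) = \hat{h}_{A'}^+(P)$. The matrix $A' = |\det A|\,A^{-1}$ has the same Jordan block sizes as $A$ and the single eigenvalue $\lambda' = |\lambda|^N/\lambda$, which shares the sign of $\lambda$; so the forward analysis just performed applies verbatim to $A'$. From the finite Neumann expansion
\[
A^{-1} = \sum_{k=0}^{\ell}(-1)^k \lambda^{-k-1} N^k
\]
one extracts $N' := A' - \lambda' I_N = \sum_{k=1}^{\ell}(-1)^k |\lambda|^N \lambda^{-k-1} N^k$; by nilpotency, only the term from the $k=1$ coefficient survives after raising to the $\ell$-th power, giving $(N')^\ell = (-1)^\ell (|\lambda|^N/\lambda^2)^\ell N^\ell$. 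Inserting this into the forward formula for $A'$ and simplifying the resulting scalar via $h(Q^c) = |c|\,h(Q^{\text{sgn}(c)})$ cancels the $(\lambda')^\ell$ in the denominator and produces the claimed identities in (1) and (2). The main obstacle throughout is the careful sign bookkeeping: verifying that both parity subsequences attain the claimed limits when $\lambda<0$, and tracking that the sign $(-1)^\ell$ in $(N')^\ell$ combines with $\text{sgn}(c)$ to turn $h(\pi(P))$ into $h(\pi(P)^{-1})$ in the backward formulas.
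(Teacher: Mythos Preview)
Your approach is the same as the paper's: write $A=\lambda I_N+\Ncal$ with $\Ncal^{\ell+1}=0$, expand $A^n=\sum_{k=0}^\ell\binom{n}{k}\lambda^{n-k}\Ncal^k$, deduce $A^n/(n^\ell|\lambda|^n)\to \pm\,\Ncal^\ell/(\ell!\,|\lambda|^\ell)$, and read off $\hat h_A^+$ by passing the limit through the local height sums. Your treatment of the two subsequential limits when $\lambda<0$ also matches the paper exactly.

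Where you go further than the paper is the backward piece: the paper writes only ``the proof for $\hat h_A^-$ is similar,'' whereas you actually compute $(N')^\ell=(-1)^\ell\bigl(|\lambda|^N/\lambda^2\bigr)^\ell \Ncal^\ell$. That identity is correct, but your final sentence does not hold up. For $\lambda>0$ the scalar $c=(-1)^\ell\bigl(|\lambda|^N/\lambda^2\bigr)^\ell$ has sign $(-1)^\ell$, so feeding this through the forward formula for $A'$ and using $h(Q^c)=|c|\,h(Q^{\operatorname{sgn}(c)})$ gives
\[
\hat h_A^-(P)=\frac{h\!\bigl(\pi(P)^{(-1)^\ell}\bigr)}{\ell!\,\lambda^\ell},
\]
which matches the stated $h(\pi(P)^{-1})/(\ell!\,\lambda^\ell)$ only when $\ell$ is odd. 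For $\ell$ even you obtain $h(\pi(P))$ instead, and once the subtorus $T$ has dimension $m\ge 2$ (e.g.\ two Jordan blocks of size $3$) the quantities $h(\pi(P))$ and $h(\pi(P)^{-1})$ genuinely differ. So the ``sign bookkeeping'' you flag as the main obstacle is precisely where your argument slips; there is nothing for $(-1)^\ell$ to ``combine with,'' since $\operatorname{sgn}(c)$ \emph{is} $(-1)^\ell$. (For $\lambda<0$ this issue disappears, because the $\max$ in part~(2) is insensitive to the sign of $c$.) The paper's one-line ``similar'' glosses over the same point, so your more explicit computation is in fact more informative than the original here.
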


\begin{proof}
First, we will prove the theorem for $\hat{h}_{A}^{+}$ in the case $\lambda > 0$.
Under the assumption, we can write $A=\lambda I + \Ncal$, where $\Ncal$ is a nilpotent matrix
such that $\Ncal^\ell\ne 0$ but $\Ncal^{\ell+1}=0$. Then
\[
A^n = (\lambda I + \Ncal)^n
    = \sum_{k=0}^\ell \binom{n}{k} \lambda^{n-k} \Ncal^k = \sum_{k=0}^\ell
                           \frac{n^k+\text{(lower order terms)}}{k!}\cdot \lambda^{n-k} \Ncal^k.
\]
Thus, as $n\to\infty$, we have
\begin{eqnarray*}
\frac{A^n}{n^\ell\lambda^n}
    &=& \sum_{k=0}^\ell \binom{n}{k} \lambda^{n-k} \Ncal^k = \sum_{k=0}^\ell
                           \frac{n^k+\text{(lower order terms)}}{n^\ell\cdot k!\cdot\lambda^k}\cdot  \Ncal^k\\
    &\longrightarrow &  \frac{\Ncal^\ell}{\ell ! \lambda^\ell}.
\end{eqnarray*}
Also notice that $\Ncal = A-\lambda I$. Let $\Ncal^\ell = (b_{ij})$, then
\begin{eqnarray*}
\hat{h}_A^+(P)
& = & \limsup_{n \rightarrow \infty}\frac{h(\varphi_A^n(P))}{n^\ell \lambda^n}\\
& = & \limsup_{n \rightarrow \infty} \sum_{v \in M_K} \max_{1\leq i \leq N}
        \{0, \sum_{j=1}^N \frac{a_{i,j}(n)}{n^\ell\lambda^{n}}\log\|x_j\|_v\} \\
& = & \sum_{v \in M_K} \max_{1\leq i \leq N} \{0, \sum_{j=1}^N \frac{b_{i,j}}{\ell !\lambda^{\ell}}\log\|x_j\|_v\}\\
& = & \frac{1}{\ell !\lambda^{\ell}} \sum_{v \in M_K} \max_{1\leq i \leq N} \{0, \sum_{j=1}^N \log\|\pi(P)_j\|_v\}\\
& = & \frac{h(\pi(P))}{\ell !\lambda^{\ell}}.
\end{eqnarray*}

If $\lambda < 0$, then the sequence $\{\frac{A^n}{n^\ell\lambda^n} \}_{n=1}^\infty$ has two limit points, namely,
$\frac{\Ncal^\ell}{\ell ! \lambda^\ell}$ and $-\frac{\Ncal^\ell}{\ell ! \lambda^\ell}$. As a consequence,
the sequence $\frac{h(\varphi_A^n(P))}{n^\ell \lambda^n}$ also has two limit points, that is,
$\frac{h(\pi(P))}{\ell !\lambda^{\ell}}$ and $\frac{h(\pi(P)^{-1})}{\ell !\lambda^{\ell}}$, and thus the limsup is the
maximum of the two.
The proof for $\hat{h}_A^-$ is similar. This completes the proof.
\end{proof}

A direct corollary of the theorem is a characterization of points of canonical height zero.

\begin{corollary}
Under the same assumption as the theorem, we have
\begin{eqnarray*}
  && \hat{h}_A(P)=0  \Longleftrightarrow  \hat{h}_A^+(P)=0  \Longleftrightarrow  \hat{h}_A^-(P)=0\\
  & \Longleftrightarrow & h(\pi(P))=0\\
& \Longleftrightarrow & \text{every coordinate of $\pi(P)$ is a root of unity.}
\end{eqnarray*}
\end{corollary}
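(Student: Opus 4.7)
The plan is to obtain the entire chain of equivalences as a direct consequence of Theorem~\ref{thm:non-diag-ht} together with Kronecker's classical characterization of algebraic numbers of zero Weil height. The argument is essentially bookkeeping; there is no deep step.

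First, since $\hat{h}_A = \hat{h}_A^+ + \hat{h}_A^-$ with both summands non-negative, one has immediately $\hat{h}_A(P)=0 \Leftrightarrow \hat{h}_A^+(P)=0$ and $\hat{h}_A^-(P)=0$. To promote this to the stated three-way equivalence I must verify that $\hat{h}_A^+(P)=0 \Leftrightarrow \hat{h}_A^-(P)=0$.

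For this I would split on the sign of $\lambda$. If $\lambda<0$, Theorem~\ref{thm:non-diag-ht}(2) asserts $\hat{h}_A^+(P)=\hat{h}_A^-(P)$, so the equivalence is automatic, and the common value equals $\max\{h(\pi(P)),h(\pi(P)^{-1})\}/(\ell!\,|\lambda|^\ell)$. If $\lambda>0$, Theorem~\ref{thm:non-diag-ht}(1) gives $\hat{h}_A^+(P)=h(\pi(P))/(\ell!\,\lambda^\ell)$ and $\hat{h}_A^-(P)=h(\pi(P)^{-1})/(\ell!\,\lambda^\ell)$, and the missing implication reduces to $h(\pi(P))=0 \Leftrightarrow h(\pi(P)^{-1})=0$.

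Finally, I would invoke Kronecker's theorem: for $Q=(q_1,\ldots,q_N)\in\GG_m^N(\Qbar)$ one has $h(Q)=0$ if and only if every coordinate $q_i$ is a root of unity. Because the set of roots of unity is closed under inversion, this yields $h(Q)=0 \Leftrightarrow h(Q^{-1})=0$, which both closes the missing implication when $\lambda>0$ and, by unpacking the $\max$, identifies the vanishing condition when $\lambda<0$ with $h(\pi(P))=0$. The same theorem delivers the last equivalence $h(\pi(P))=0 \Leftrightarrow \text{every coordinate of }\pi(P)\text{ is a root of unity}$, completing the chain. The only non-formal input beyond Theorem~\ref{thm:non-diag-ht} is this classical fact, so I anticipate no real obstacle.
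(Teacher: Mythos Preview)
Your argument is correct and follows essentially the same route as the paper: both use Theorem~\ref{thm:non-diag-ht} to reduce the vanishing of $\hat{h}_A^{\pm}$ to the vanishing of $h(\pi(P))$ or $h(\pi(P)^{-1})$, and then invoke Kronecker's theorem (plus closure of roots of unity under inversion) to conclude. The paper handles the two signs of $\lambda$ uniformly in one sentence rather than splitting into cases, but the content is identical.
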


\begin{proof}
If any of the $\hat{h}_A$, $\hat{h}_A^+$, or $\hat{h}_A^-$ is zero, then by the theorem, one of
$h(\pi(P))$, $h(\pi(P)^{-1})$ must be zero. By the Kronecker's theorem, every coordinate of $\pi(P)$ is a root of unity,
hence the other value is zero, too.
\end{proof}

Notice that for the case $N=2$, the corollary states exactly the same condition as in Theorem~\ref{thm_zero_height_by_Silverman_in_dim_2}.

To summarize, we observe that, by Theorem~\ref{thm:non-diag-ht}, the canonical height of $P$ under $\phi_A$ depend only on $\pi(P)$.
For a point $Q\in T(\Qbar)$, every point in the fiber $\pi^{-1}(Q)$ will have the same canonical height, so the
canonical height function degenerates to the height function on a subtorus (the base of the fibration).
As a consequence, the Northcott finiteness property does not hold in this case.


\section{Points with Small Total Canonical Height}
A problem for the total canonical height function is that there are still points with small height in
dimension $\ge 3$. Therefore, for higher dimension, a general satisfying theory of canonical height 
functions is still needed.

\begin{proposition}
Given $A\in\Mat_N^+(\ZZ), N\ge 3$, suppose its characteristic polynomial is irreducible, and all the eigenvalues are
distinct and positive. Then, for any $\varepsilon >0$, there are infinitely many $P\in\GG_m^N(\QQ)$ with
$0 < \hat{h}_{A}(P) < \varepsilon$.
\end{proposition}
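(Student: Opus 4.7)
The plan is to adapt the construction from the proof of Proposition~\ref{prop:small_height} so that both $\hat h_A^+(P)$ and $\hat h_A^-(P)$ are simultaneously small. Write $A = B\Lambda B^{-1}$ with $\Lambda = \diag(\lambda_1,\ldots,\lambda_N)$, $\lambda_1 > \cdots > \lambda_N > 0$, and $B^{-1}=(c_{i,j})$. Since $A' = D\cdot A^{-1}$ commutes with $A$, it is diagonalized by the same $B$ with eigenvalues $D/\lambda_i$; its dominant eigenvalue is $D/\lambda_N$, corresponding to index $k = N$. Therefore, in the limit identity derived in Proposition~\ref{prop:small_height}, the only surviving term for $\hat h_A^+(P)$ involves $b_{i,1}c_{1,j}$, while the analogous computation for $\hat h_A^-(P) = \hat h_{A'}^+(P)$ leaves only the term $b_{i,N}c_{N,j}$.

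To make both expressions small at a rational point $P=(2^{y_1},\ldots,2^{y_N})$, I would seek integers $y_1,\ldots,y_N$ for which both linear forms $\sum_j c_{1,j}\,y_j$ and $\sum_j c_{N,j}\,y_j$ are small. Since $B^{-1}$ is invertible, the rows $(c_{1,1},\ldots,c_{1,N})$ and $(c_{N,1},\ldots,c_{N,N})$ are linearly independent, so their common kernel in $\RR^N$ has dimension $N-2$. Because $N\ge 3$, this kernel is nonzero, and since everything is defined over the totally real splitting field $K$, we can pick a nonzero vector $(z_1,\ldots,z_N) \in K^N$ lying in it. Applying simultaneous Dirichlet approximation, for any $\varepsilon' > 0$ there exist a nonzero integer $y$ and integers $y_1,\ldots,y_N$ (not all zero) with $|y z_i - y_i| < \varepsilon'$ for every $i$.

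Setting $R := \max_{i,j,k}|b_{i,k}c_{k,j}|$ as in Proposition~\ref{prop:small_height}, the triangle inequality gives
\[
\Bigl|\sum_{j=1}^N b_{i,1}c_{1,j}\,y_j\Bigr| \le NR\,\varepsilon', \qquad
\Bigl|\sum_{j=1}^N b_{i,N}c_{N,j}\,y_j\Bigr| \le NR\,\varepsilon',
\]
for all $i$, where the first uses $\sum_j c_{1,j}z_j = 0$ and the second uses $\sum_j c_{N,j}z_j = 0$. Then, repeating the limsup computation of Proposition~\ref{prop:small_height} separately for $A$ and for $A'$, we conclude
\[
\hat h_A^+(P) \le 2\log(2) NR\,\varepsilon',\qquad
\hat h_A^-(P) \le 2\log(2) NR\,\varepsilon',
\]
so $\hat h_A(P) \le 4\log(2)NR\,\varepsilon'$. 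Choosing $\varepsilon' := \varepsilon/(4\log(2)NR)$ gives $\hat h_A(P) < \varepsilon$. Since not all $y_i$ are zero, the coordinates of $P$ are not roots of unity, hence $P \notin \preper(\varphi_A)$; by the proposition preceding Theorem~\ref{thm:unique}, $\hat h_A(P) > 0$. Finally, as in Example~\ref{ex:small_height_dim_2}, replacing the base $2$ by any other rational prime produces infinitely many distinct such points.

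The only real obstacle is verifying the linear-algebra setup: confirming that $A$ and $A'$ share the eigenbasis $B$, correctly identifying the dominant eigenvalue of $A'$ as $D/\lambda_N$, and checking that the common kernel of the two row-forms is nontrivial. All three hinge on invertibility of $B$ and the dimension count $N-2 \ge 1$, which is exactly where the hypothesis $N\ge 3$ enters. The simultaneous Diophantine approximation and the subsequent height estimate are then routine extensions of the $N=2$ argument.
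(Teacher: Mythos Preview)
Your proposal is correct and follows essentially the same route as the paper's own sketch: diagonalize $A$ and $A'$ simultaneously, pick a nonzero vector in the common kernel of rows $1$ and $N$ of $B^{-1}$ (which exists precisely because $N\ge 3$), approximate it by an integer vector via simultaneous Dirichlet, and bound $\hat h_A^+$ and $\hat h_A^-$ separately to get $\hat h_A(P)\le 4\log(2)NR\,\varepsilon'$. Your write-up is in fact more complete than the paper's sketch, since you spell out why $N\ge 3$ is needed, why $A'$ shares the eigenbasis of $A$ with dominant index $N$, and why $\hat h_A(P)>0$ with infinitely many such $P$.
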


The technique to prove this proposition is very similar to the proof of Proposition~\ref{prop:small_height}, thus we 
only give a sketch of the proof and omit some detail.

\begin{proof}[Sketch of the Proof]
The proof is very similar to the proof of Theorem~\ref{prop:small_height}, so we omit some details.

Write the matrix $A$ as $A=B\Lambda B^{-1}$, where
\[
\Lambda = \diag(\lambda_1,\cdots,\lambda_N)
\]
is diagonal with $\lambda_1> \cdots > \lambda_N>0$,
$B=(b_{i,j})$, and $B^{-1}=(c_{i,j})$.
Define
\[
R:=\max_{i,j,k}\{ |b_{i,k}c_{k,j}| \},
\]
which only depends on $A$.

Let $K$ be the splitting field of the characteristic polynomial of $A$, so $K$ is totally real.
We can find a nonzero vector $(z_1,\ldots,z_N) \in K^N \subset \RR^N$, such that
$\sum_{j=1}^N c_{1,j}z_j=0$ and $\sum_{j=1}^N c_{N,j}z_j=0.$
Thus, for all $i$, we have
$\sum_{j=1}^N b_{i,1}c_{1,j}z_j = 0$ and $\sum_{j=1}^N b_{i,N}c_{N,j}z_j = 0$.

We can find integers $y_1,\ldots,y_N$, not all zero, and a nonzero integer $y$
such that
\[
|y z_i-y_i| < \varepsilon' \text{ for $i=1,\cdots, N$.}
\]
For all $i=1,\cdots,N$, we have the upper bounds
\[
|\sum_{j=1}^N b_{i,1}c_{1,j}y_j| \le  NR\cdot \varepsilon' \quad\text{and}\quad
|\sum_{j=1}^N b_{i,N}c_{N,j}y_j| \le  NR\cdot \varepsilon'.
\]
Let $P=(2^{y_1},\ldots,2^{y_N})$, then
\begin{eqnarray*}
\hat{h}_{A}(P)
&=& \limsup_{n\rightarrow \infty} \frac{h(\varphi_A^n(P))}{\lambda_1^n} +
    \limsup_{n\rightarrow \infty} \frac{h(\varphi_{A'}^n(P))}{(\lambda_2 \ldots \lambda_N)^n} \\
&\leq& 2\log(2)\Bigl(\max_{1 \leq i \leq N} \{ |\sum_{j=1}^N b_{i,1}c_{1,j} y_j)| \}
               +\max_{1 \leq i \leq N} \{ |\sum_{j=1}^N b_{i,N}c_{N,j} y_j)| \}\Bigr) \\
&\leq& 4\log(2)NR\varepsilon'.
\end{eqnarray*}
Let $\varepsilon = \frac {\varepsilon'}{4\log(2)NR}$ and we are done.
\end{proof}


\begin{bibdiv}
\begin{biblist}

\bib{AmDv}{article}{
   author={Amoroso, Francesco},
   author={Dvornicich, Roberto},
   title={A lower bound for the height in abelian extensions},
   journal={J. Number Theory},
   volume={80},
   date={2000},
   number={2},
}

\bib{DS}{article}{
   author={Dinh, Tien-Cuong},
   author={Sibony, Nessim},
   title={Upper bound for the topological entropy of a meromorphic
   correspondence},
   journal={Israel J. Math.},
   volume={163},
   date={2008},
   pages={29--44},
   issn={0021-2172},
}

\bib{do}{article}{
   author={Dobrowolski, E.},
   title={On a question of Lehmer and the number of irreducible factors of a
   polynomial},
   journal={Acta Arith.},
   volume={34},
   date={1979},
   number={4},
}

\bib{HP}{article}{
   author={Hasselblatt, Boris},
   author={Propp, James},
   title={Degree-growth of monomial maps},
   journal={Ergodic Theory Dynam. Systems},
   volume={27},
   date={2007},
   number={5},
   pages={1375--1397},
   issn={0143-3857},
}

\bib{HiSi}{book}{
   author={Hindry, Marc},
   author={Silverman, Joseph H.},
   title={Diophantine geometry},
   series={Graduate Texts in Mathematics},
   volume={201},
   publisher={Springer-Verlag},
   place={New York},
   date={2000},
   pages={xiv+558},
   isbn={0-387-98975-7},
   isbn={0-387-98981-1},
}

\bib{K1}{article}{
   author={Kawaguchi, Shu},
   title={Canonical height functions for affine plane automorphisms},
   journal={Math. Ann.},
   volume={335},
   date={2006},
   number={2},
}

\bib{K2}{article}{
   title={Local and global canonical height functions for affine space regular automorphisms},
   author={Shu Kawaguchi},
   eprint={arXiv:0909.3573v1 [math.AG]}
}

\bib{Lee}{article}{
   title={An upper bound for the height for regular affine automorphisms of $\AA^n$},
   author={ChongGyu Lee},
   eprint={arXiv:0909.3107 [math.NT]}
}

\bib{Lin1}{article}{
   title={Algebraic stability and degree growth of monomial maps and polynomial maps},
   author={Jan-Li Lin},
   eprint={arXiv:1007.0253 [math.DS]}
}

\bib{Lin2}{article}{
   title={On Degree Growth and Stabilization of Three Dimensional Monomial Maps},
   author={Jan-Li Lin},
   eprint={arXiv:1204.6258 [math.DS]}
}

\bib{S1}{article}{
   title={Dynamical Degrees, Arithmetic Degrees, and Canonical Heights for
          Dominant Rational Self-Maps of Projective Space},
   author={Silverman, Joseph H.},
   eprint={arXiv:1111.5664 [math.NT]}
}


\bib{S2}{article}{
   author={Silverman, Joseph H.},
   title={Rational points on $K3$ surfaces: a new canonical height},
   journal={Invent. Math.},
   volume={105},
   date={1991},
   number={2},
   pages={347--373},
   issn={0020-9910},
}

\end{biblist}
\end{bibdiv}

\end{document}